\DeclareMathAlphabet{\mathpzc}{OT1}{pzc}{m}{it}
\DeclareSymbolFont{SY}{U}{psy}{m}{n}
\DeclareMathSymbol{\emptyset}{\mathord}{SY}{'306}
\theoremstyle{plain}
\newtheorem{thm}{Theorem}[section]
\newtheorem{cor}[thm]{Corollary}
\newtheorem{lem}[thm]{Lemma}
\newtheorem{prop}[thm]{Proposition}
\theoremstyle{definition}
\newtheorem{rem}[thm]{Remark}
\numberwithin{equation}{section}
\def\C{{\mathbb C}}
\def\D{{\mathbb D}}
\def\inp#1,#2{\left\langle{#1},{#2}\right\rangle}
\def\l{\lambda}
\def\beq{\begin{eqnarray}}
\def\eeq{\end{eqnarray}}
\def\beqa{\begin{eqnarray*}}
\def\eeqa{\end{eqnarray*}}
\def\<{\langle}
\def\>{\rangle}
\def\bz{\boldsymbol{z}}
\def\bw{\boldsymbol{w}}
\newcommand{\be}{\begin{equation}}
\newcommand{\ee}{\end{equation}}
\newcommand{\bea}{\begin{eqnarray}}
\newcommand{\eea}{\end{eqnarray}}
\newcommand{\Bea}{\begin{eqnarray*}}
\newcommand{\Eea}{\end{eqnarray*}}
\newcounter{cnt1}
\newcounter{cnt2}
\newcounter{cnt3}
\newcommand{\blr}{\begin{list}{$($\roman{cnt1}$)$}
 {\usecounter{cnt1} \setlength{\topsep}{0pt}
 \setlength{\itemsep}{0pt}}}
\newcommand{\bla}{\begin{list}{$($\alph{cnt2}$)$}
 {\usecounter{cnt2} \setlength{\topsep}{0pt}
 \setlength{\itemsep}{0pt}}}
\newcommand{\bln}{\begin{list}{$($\arabic{cnt3}$)$}
 {\usecounter{cnt3} \setlength{\topsep}{0pt}
 \setlength{\itemsep}{0pt}}}
\newcommand{\el}{\end{list}}
\newcommand{\overbar}[1]{\mkern 1.5mu\overline{\mkern-1.5mu#1\mkern-1.5mu}\mkern 1.5mu}
\DeclareMathOperator{\aut}{Aut}
\newcounter{defcounter}
\begin{document}
\title[M\"ob - homogeneous]{Representations of the M\"{o}bius group and pairs of homogeneous operators in the Cowen-Douglas class}

\author[J. Das]{Jyotirmay Das}
\address[J. Das]{Institute for Advancing Intelligence\\ TCG Centres for Research and Education in Science and Technology \\ Kolkata }
\address{Department of Mathematics\\ National Institute of Technology Durgapur\\ India }
\email{iamjyotirmay1999@gmail.com}

\author[S. Hazra]{Somnath Hazra}

\address[S. Hazra]{Institute for Advancing Intelligence\\ TCG Centres for Research and Education in Science and Technology \\ Kolkata }
\address{Academy of Scientific and Innovative Research (AcSIR)\\ Ghaziabad- 201002\\ India.}
\email{somnath.hazra@tcgcrest.org}

\keywords{Projective Representations, cocycles, Homogeneous Operators, Cowen-
Douglas class} 
\subjclass[2020]{Primary: 20C25, 22E46, 47B13 Secondary: 47B32}
\thanks{The research of the first named author was supported by TCG CREST Ph.D Fellowship.}

\begin{abstract} 
Let M\"ob be the biholomorphic automorphism group of the unit disc of the complex plane, $\mathcal{H}$ be a complex separable Hilbert space and $\mathcal{U}(\mathcal{H})$ be the group of all unitary operators. Suppose $\mathcal{H}$ is a reproducing kernel Hilbert space consisting of holomorphic functions over the poly-disc $\mathbb D^n$ and contains all the polynomials. If $\pi : \mbox{M\"ob} \to \mathcal{U}(\mathcal{H})$ is a multiplier representation, then we prove that there exist $\lambda_1, \lambda_2, \ldots, \lambda_n > 0$ such that $\pi$ is unitarily equivalent to $(\otimes_{i=1}^{n} D_{\lambda_i}^+)|_{\mbox{\tiny{M\"ob}}}$, where each $D_{\lambda_i}^+$ is a holomorphic discrete series representation of M\"ob. As an application, we prove that if $(T_1, T_2)$ is a M\"ob - homogeneous pair in the Cowen - Douglas class of rank $1$ over the bi-disc, then each $T_i$ posses an upper triangular form with respect to a decomposition of the Hilbert space. In this upper triangular form of each $T_i$, the diagonal operators are identified. We also prove that if $\mathcal{H}$ consists of symmetric (resp. anti-symmetric) holomorphic functions over $\mathbb D^2$ and contains all the symmetric (resp. anti-symmetric) polynomials, then there exists $\lambda > 0$ such that $\pi \cong \oplus_{m = 0}^\infty D^+_{\lambda + 4m}$ (resp. $\pi \cong \oplus_{m=0}^\infty D^+_{\lambda + 4m + 2}$). 

\end{abstract}

\maketitle

\section{Introduction}
Let $G$ be a locally compact second countable group, $\mathcal{H}$ be a complex separable Hilbert space and $\mathcal{U}(\mathcal{H})$ be the group of all unitary operators on $\mathcal{H}$. A projective unitary representation of $G$ on $\mathcal{H}$ is a Borel map $\pi : G \to \mathcal{U}(\mathcal{H})$ such that 
$$\pi(1) = I,\,\,\pi(gh) = m(g, h) \pi(g) \pi(h),\,\,g, h \in G,$$
where $m : G \times G \to \mathbb T$ is a Borel map ($\mathbb T$ is the circle group) \cite[p. 248]{VAR}. Throughout this article, a projective unitary representation is called a projective representation. The function $m$ associated with a projective representation $\pi$ is called the multiplier of $\pi$. The multiplier $m$ must satisfy the following equations
$$m(g, 1) = m(1, g) = 1,\,\,m(g_1, g_2)m(g_1 g_2, g_3) = m(g_1, g_2g_3) m(g_2, g_3),$$
for all $g, g_1, g_2, g_3 \in G$ \cite[p. 247]{VAR}. Two projective representations $\pi_i : G \to \mathcal{U}(\mathcal{H}_i)$, $i = 1, 2$ are said to be equivalent, denoted by $\pi_1 \cong \pi_2$, if there exists a Borel map $f : G \to \mathbb T$ and a unitary operator $U : \mathcal{H}_1 \to \mathcal{H}_2$ such that 
$$\pi_1(g) = f(g) U^* \pi_2(g) U$$
holds for every $g \in G$ \cite[Definition 3.2]{THS}.

A closed subspace $\mathcal{A}$ of $\mathcal{H}$ is said to be an invariant subspace of a projective representation $\pi : G \to \mathcal{U}(\mathcal{H})$, if $\pi(g) \mathcal{A} \subseteq \mathcal{A}$ for every $g \in G$. It is straightforward to verify that if $\mathcal{A}$ is an invariant subspace of $\pi$, then $\mathcal{A}$ is also a reducing subspace for every $\pi(g)$. A projective representation is said to be irreducible, if it does not have any reducing subspace. Given a projective representation $\pi$, it is natural to ask if there exists a decomposition of $\mathcal{H}$ into a direct sum of irreducible subspaces of $\pi$. 

If $\Omega$ is a bounded symmetric domain and $G$ is the biholomorphic automorphism group of $\Omega$, a decomposition of tensor product of two holomorphic discrete series representations of $G$ is obtained in \cite{TPHR}.
Let M\"ob be the group of all biholomorphic automorphisms of the unit disc $\mathbb D := \{z \in \mathbb C : |z| < 1\}$ of the complex plane $\mathbb C$.
In this article, we describe every multiplier representation of M\"ob on reproducing kernel Hilbert spaces consisting of holomorphic functions of $\mathbb D^2$. This, in particular, produces a decomposition of the tensor product of two holomorphic discrete series representations of M\"ob.

Let $X$ be a topological space and let $G$ have an action on $X$. Suppose $\mathcal{H}$ is a Hilbert space and $\mathcal{M}$ is a dense subspace of $\mathcal{H}$ which consists of functions on $X$. A projective representation $\pi$ of $G$ on $\mathcal{H}$ is said to be a multiplier representation if $\mathcal{M}$ is invariant under $\pi$ and
\begin{equation}\label{eqn: in1}
\left(\pi(g^{-1})f\right)(x) = c(g, x) f(g \cdot x),\,\,g \in G,\,\,f \in \mathcal{M},\,\,x \in X,
\end{equation}
where $c : G \times X \to \mathbb C$ is a non-vanishing measurable function \cite[p. 299]{THS}. Note that the Equation \eqref{eqn: in1} defines a projective representation with an associated multiplier $m$ if and only if $c$ satisfies the following equation
\begin{equation}\label{eqn: in2}
c(gh, x) = m(h^{-1}, g^{-1})c(g, h \cdot x) c(h, x)  
\end{equation}
for every $g, h \in G$ and $x \in X$. A non-vanishing function $c : G \times X \to \mathbb C$ satisfying the Equation \eqref{eqn: in2} is called a projective cocycle. For simplicity, throughout this article, a projective cocycle is called a cocycle.


For $\lambda > 0$, let $\mathbb A^{(\lambda)}(\mathbb D)$ be the reproducing kernel Hilbert space with the reproducing kernel $B^{(\lambda)}(z, w) = (1 - z \bar{w})^{- \lambda}$, $z, w \in \mathbb D$. 
Note that for each $\varphi \in$ M\"ob, $\varphi'(z)$ lies in the open disc of radius $1$ centered at $1$. Therefore, there exists a branch of logarithm such that $z(\in \D) \to \log \varphi'(z)$ is analytic. For the rest of the paper, we fix such a branch of logarithm for each $\varphi \in$ M\"ob with the convention that $\log \varphi' \equiv 0$ if $\varphi$ is the identity map.
The cocycle $c^{(\lambda)} : \mbox{M\"ob} \times \mathbb D \to \mathbb C$, given by $c^{(\lambda)}(\varphi, z) = \varphi'(z)^{\frac{\lambda}{2}}$, defines a projective representation $D_\lambda^+$ of M\"ob on $\mathbb A^{(\lambda)}(\mathbb D)$ by the Equation \eqref{eqn: in1}. These are called the holomorphic discrete series representations of M\"ob \cite[List 3.1(1)]{THS}.

Let M\"ob $\times$ M\"ob denote the direct product of two copies of M\"ob. For any $\lambda, \mu > 0$, $D_\lambda^+ \otimes D_\mu^+$ naturally defines a representation of M\"ob $\times$ M\"ob on the Hilbert space $\mathbb A^{(\lambda)}(\mathbb D) \otimes \mathbb A^{(\mu)}(\mathbb D)$. The tensor product $D_\lambda^+ \otimes D_\mu^+$ is an irreducible representation of M\"ob $\times$ M\"ob on $\mathbb A^{(\lambda)}(\mathbb D) \otimes \mathbb A^{(\mu)}(\mathbb D)$, that is, there does not exist any proper closed subspace $\mathcal{M}$ of $\mathbb A^{(\lambda)}(\mathbb D) \otimes \mathbb A^{(\mu)}(\mathbb D)$ such that $\mathcal{M}$ is a reducing subspace of $D_\lambda^+(\varphi_1) \otimes D_\mu^+(\varphi_2)$ for every $\varphi_1, \varphi_2$ in M\"ob. Since the diagonal subgroup $\{(\varphi, \varphi) : \varphi \in \mbox{M\"ob}\}$ of M\"ob $\times$ M\"ob is naturally isomorphic to M\"ob, we denote it by M\"ob. Note that the restriction of the representation $D_\lambda^+ \otimes D_\mu^+$ to M\"ob is not irreducible. In fact, $D_\lambda^+ \otimes D_\mu^+|_{\mbox{\tiny{M\"ob}}} \cong \displaystyle \oplus_{n=0}^\infty D_{\lambda + \mu + 2n}^+$. This is the Clebsh-Gordan formula.

Note that the group M\"ob has a natural action on the bi-disc $\D^2$ via the map $(\varphi, (z_1, z_2)) \to (\varphi(z_1), \varphi(z_2))$, $\varphi \in$ M\"ob, $(z_1, z_2) \in \D^2$. Suppose $\pi$ is a multiplier representation of M\"ob on a reproducing kernel Hilbert space $\mathcal{H}$. It is  proved that if $\mathcal{H}$ consists of holomorphic functions over $\D^2$ and contains all the polynomials $\C[\bz]$, then $\mathcal{H}$ is decomposed into orthogonal direct sum $\mathcal{H} = \oplus_{n = 0}^\infty \mathcal{S}_n$ of irreducible subspaces of $\pi$. It is also observed that there exists $\eta > 0$ such that $\pi|_{\mathcal{S}_n} \cong D_{\eta+2n}^+$ for every $n \geq 0$ and therefore, $\pi \cong \displaystyle \oplus_{n=0}^\infty D_{\eta+2n}^+$. The Clebsh-Gordan formula is an immediate consequence of this decomposition. It is also observed that if $\mathcal{H}$ consists of symmetric (resp. anti-symmetric) holomorphic functions over $\D^2$ and contains all the symmetric (resp. anti-symmetric) polynomials, then there exists $\eta > 0$ such that $\pi \cong \oplus_{n=0}^\infty D^+_{\eta + 4n}$ (resp. $\pi \cong \oplus_{n=0}^\infty D^+_{\eta + 4n + 2}$). In Section \ref{section 2}, Theorem \ref{thm3} verifies all these assertions. As an immediate consequence of the Theorem \ref{thm3}(i) and the Clebsh-Gordan formula, we observe that every multiplier representation $\pi$ on a reproducing kernel Hilbert $\mathcal{H} \subseteq \mbox{Hol}(\D^2, \C)$ containing $\C[\bz]$ is equivalent to $D^+_{\l_1} \otimes D^+_{\l_2}|_{\mbox{\tiny{M\"ob}}}$ for some $\l_1, \l_2 > 0$.

For $\lambda > 0$, let $\mathbb A^{(\lambda)}_{\mbox{\tiny{sym}}}(\D^2)$ and $\mathbb A^{(\lambda)}_{\mbox{\tiny{anti}}}(\D^2)$ denote the subspaces of $\mathbb A^{(\lambda)}(\mathbb D) \otimes \mathbb A^{(\lambda)}(\mathbb D)$ consisting of symmetric functions and anti-symmetric functions, respectively. The subspaces $\mathbb A^{(\lambda)}_{\mbox{\tiny{sym}}}(\D^2)$ and $\mathbb A^{(\lambda)}_{\mbox{\tiny{anti}}}(\D^2)$ are invariant under the representation $D^+_\lambda \otimes D_\lambda^+|_{\mbox{\tiny{M\"ob}}}$. Finally, it is observed as a corollary of Theorem \ref{thm3} in Section \ref{section 2} that the representation $D^+_\lambda \otimes D_\lambda^+ : \mbox{M\"ob} \to \mathcal{U}\left(A^{(\lambda)}_{\mbox{\tiny{sym}}}(\D^2)\right)$ is unitarily equivalent to the representation $\displaystyle \oplus_{n=0}^\infty D_{2\lambda + 4n}^+$ and consequently, the representation $D^+_\lambda \otimes D_\lambda^+ : \mbox{M\"ob} \to \mathcal{U}\left(A^{(\lambda)}_{\mbox{\tiny{anti}}}(\D^2)\right)$ is unitarily equivalent to the representation $\displaystyle \oplus_{n=0}^\infty D_{2\lambda + 4n + 2}^+$

It is then verified in the Section \ref{section 2} that the Theorem \ref{thm3} has a natural generalization to the poly-disc case. The group M\"ob also has a natural action to the poly-disc $\D^n$ via the map 
$$(\varphi, (z_1, \ldots, z_n)) \to (\varphi(z_1), \ldots, \varphi(z_n)),$$
$\varphi \in$ M\"ob and $(z_1, \ldots, z_n) \in \mathbb D^n$. If $\pi : \mbox{M\"ob} \to \mathcal{U}(\mathcal{H})$ is a multiplier representation where $\mathcal{H} \subseteq \mbox{Hol}(\mathbb D^n, \mathbb C)$ is a reproducing kernel Hilbert space and contains the set of all polynomials $\C[\bz]$, then there exist $\l_1, \ldots, \l_n > 0$ such that $\pi \cong D^+_{\l_1} \otimes \cdots \otimes D^+_{\l_n}|_{\mbox{\tiny{M\"ob}}}$. This is the Theorem \ref{main thm}.

A bounded linear operator $T$ on a complex separable Hilbert space $\mathcal{H}$ is said to be homogeneous if the spectrum of $T$ is contained in $\overline{\D}$ and, for every $\varphi \in$ M\"ob, $\varphi(T)$ is unitarily equivalent to $T$. 
The class of homogeneous operators has been studied in a number of articles \cite{HOPRMGS, THS, HHHVB1, HOHS, ACHOCD, OICHO, HVBIOSD, HOJCS}.
The notion of a homogeneous operator has a natural generalization to a commuting tuple of operators. 
Let $\Omega$ be a bounded symmetric domain in $\C^n$ and $G$ be a subgroup of the biholomorphic automorphism group $\aut(\Omega)$ of $\Omega$. A commuting $n$ - tuple of operators $(T_1, T_2,\ldots ,T_n)$ on a complex separable Hilbert space is said to be homogeneous with respect to $G$ (or $G$-homogeneous) if the Taylor joint spectrum of $(T_1, T_2,\ldots , T_n)$ lies in $\overline{\Omega}$ and $g(T_1, T_2,\ldots , T_n)$ is unitarily equivalent with $(T_1, T_2,\ldots ,T_n)$ for all $g \in G$ \cite[Definition 1.1]{HTOHDSR}. In this article, we consider the case when $\Omega = \mathbb D^2$ and $G$ is the diagonal subgroup $\{(\varphi, \varphi) : \varphi \in \mbox{M\"ob}\}$ of M\"ob $\times$ M\"ob. We refer to a pair of operators that is homogeneous with respect to the group $\{(\varphi, \varphi): \varphi \in \mbox{M\"ob}\}$ as a M\"ob - homogeneous pair, since the subgroup $\{(\varphi, \varphi): \varphi \in \mbox{M\"ob}\}$ is naturally identified with M\"ob.

For every $\lambda, \mu > 0$, the pairs of multiplication operators by the coordinate functions on $\mathbb A^{(\lambda)}(\mathbb D) \otimes \mathbb A^{(\mu)}(\mathbb D)$ are examples of M\"ob - homogeneous pairs. However, these are also M\"ob $\times$ M\"ob - homogeneous pairs (cf. \cite[Theorem 3.1]{HHHVB1}). This naturally raises the question of finding M\"ob - homogeneous pairs of operators that are not M\"ob $\times$ M\"ob - homogeneous and classifying all the M\"ob - homogeneous pairs.
 
Let $(T_1, T_2)$ be a M\"ob - homogeneous pair of operators on a complex separable Hilbert space $\mathcal{H}$ in the Cowen - Douglas class $B_1(\D^2)$ of rank $1$ over $\D^2$. Note that every operator pair in $B_1(\mathbb D^2)$ is realized as an adjoint of the pair of multiplication by the coordinate functions on a reproducing kernel Hilbert space consisting of holomorphic functions of $\D^2$ (cf. \cite{GBKCD}). Therefore, we assume that $(T_1, T_2)$ is $(M_{z_1}^*, M_{z_2}^*)$ on a reproducing kernel Hilbert space $(\mathcal{H}, K) \subseteq \mbox{Hol}(\mathbb D^2, \mathbb C)$ where $K : \mathbb D^2 \times \mathbb D^2 \to \mathbb C$ is a reproducing kernel. We first obsreve that there exists a multiplier representation $\pi : \mbox{M\"ob} \to \mathcal{U}(\mathcal{H})$ such that $\varphi (M_{z_1}, M_{z_2}) = (\varphi(M_{z_1}), \varphi(M_{z_2})) = \pi(\varphi)^* (M_{z_1}, M_{z_2}) \pi(\varphi)$ for every $\varphi$ in M\"ob. Now, Theorem \ref{thm3}(i) yields that $\mathcal{H}$ is decomposed into orthogonal direct sum $\mathcal{H} = \oplus_{n = 0}^\infty \mathcal{S}_n$ of irreducible subspaces of $\pi$. It is then verified that $\displaystyle \oplus_{i = 0}^m \mathcal{S}_i$ is an invariant subspace of each $M_{z_i}^*$, $i = 1, 2,$ for every $m \geq 0$. Let $P_n$ denote the orthogonal projection onto the subspace $\mathcal{S}_n$ for every $n \geq 0$. Additionally, for any $n \geq 0$ and $i = 1, 2$, it is demonstrated that $P_n M_{z_i}^*|_{\mathcal{S}_n}$ is unitarily equivalent to the adjoint of the multiplication operator ${M^{(\lambda + 2n)}}$ by the coordinate function on $\mathbb A^{(\lambda + 2n)}(\D)$ for some $\lambda > 0$. All these assertions are verified in Section \ref{section 3}.



\section{Representations of M\"ob}\label{section 2}
Let $\mathcal{H}$ be a reproducing kernel Hilbert space consisting of holomorphic functions on $\D^n$ with reproducing kernel $K : \D^n \times \D^n \to \C$ and $\pi : \mbox{M\"ob} \to \mathcal{U}(\mathcal{H})$ be a non-trivial multiplier representation, defined by
\begin{equation}\label{eq:2}
\left(\pi(\varphi^{-1})f\right)(\bz) = c(\varphi, \bz)f(\varphi(\bz)),\,\,\varphi \in \mbox{M\"ob},\,\,f \in \mathcal{H},\,\,\bz \in \D^n.
\end{equation} 
Note that the cocycle $c$ is a Borel map in first variable and a holomorphic map in second variable. In this section, we find a decomposition of $\mathcal{H}$ into direct sum of irreducible subspaces of $\pi$ and identify the restriction of $\pi$ to each of these subspaces. This section is divided into two subsections. In the first subsection, we consider the case when $n = 2$ and in the later subsection, we consider $n$ to be arbitrary.


\subsection{The case when $n = 2$}
To describe the representation $\pi$ in this case, consider the set $\Delta := \{(z,z) : z \in \D\}$. For $m \geq 0$, let 
\begin{equation}\label{eq:14}
\mathcal{M}_m := \left\lbrace f \in \mathcal{H} : \partial_{1}^i f|_{\Delta} = 0,\,\,0 \leq i \leq m \right\rbrace.
\end{equation}
Note that each $\mathcal{M}_m$ is a closed subspace and $\mathcal{M}_m \subseteq \mathcal{M}_l$ if $m \geq l$. Consider the subspaces $\mathcal{S}_0 = \mathcal{H} \ominus \mathcal{M}_0$ and $\mathcal{S}_m = \mathcal{M}_{m-1} \ominus \mathcal{M}_m$ for $m \geq 1$. Then, we have $\mathcal{H} = \displaystyle \oplus_{m = 0}^\infty \mathcal{S}_m$.
\begin{prop}\label{prop1}
(i) For each $m \geq 0$, the subspace $\mathcal{S}_m$ is a reducing subspace of the representation $\pi$.

(ii) For every $\varphi \in$ M\"ob and $z \in \mathbb D$, $c(\varphi, (z,z)) \neq 0$.
\end{prop}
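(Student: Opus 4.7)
The strategy for (i) is to prove the stronger statement that each $\mathcal{M}_m$ is invariant under $\pi$; once this is done, the observation from the introduction that any closed invariant subspace of a projective unitary representation is automatically reducing forces each $\mathcal{M}_m$ to be reducing, and hence the orthogonal difference $\mathcal{S}_0 = \mathcal{H} \ominus \mathcal{M}_0$ (resp.\ $\mathcal{S}_m = \mathcal{M}_{m-1} \ominus \mathcal{M}_m$ for $m \geq 1$) will be reducing as well.

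To establish invariance of $\mathcal{M}_m$, fix $f \in \mathcal{M}_m$ and $\varphi \in \mbox{M\"ob}$ and examine
\[
(\pi(\varphi^{-1}) f)(z_1, z_2) = c(\varphi, (z_1, z_2)) \, f(\varphi(z_1), \varphi(z_2)).
\]
An induction based on Leibniz's rule together with a Faa di Bruno expansion of $\partial_1^k [f(\varphi(z_1), \varphi(z_2))]$ yields the structural identity
\[
\partial_1^i (\pi(\varphi^{-1}) f)(z_1, z_2) = \sum_{j=0}^i A_{i,j}(z_1, z_2) \, (\partial_1^j f)(\varphi(z_1), \varphi(z_2)),
\]
in which each $A_{i,j}$ is a polynomial in the partial derivatives $\partial_1^k c(\varphi, \cdot)$ and in the derivatives of $\varphi$ at $z_1$. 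Restricting to $(z_1, z_2) = (z, z)$ places the argument $(\varphi(z), \varphi(z))$ back on $\Delta$, so for $i \leq m$ every summand features $(\partial_1^j f)(\varphi(z), \varphi(z))$ with $j \leq m$ and therefore vanishes by the defining condition of $\mathcal{M}_m$. Thus $\partial_1^i (\pi(\varphi^{-1}) f)|_\Delta = 0$ for all $0 \leq i \leq m$, proving $\pi(\varphi^{-1}) f \in \mathcal{M}_m$.

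Part (ii) is essentially a check against the definition: the cocycle $c$ is required to be non-vanishing on $\mbox{M\"ob} \times \D^2$ by the very definition of a multiplier representation, so $c(\varphi, (z, z)) \neq 0$ is immediate. A self-contained derivation from the cocycle identity \eqref{eqn: in2} is also available: substituting $g = \varphi$, $h = \varphi^{-1}$ and using $c(1, x) = 1$ (a consequence of $\pi(1) = I$) yields $1 = m(\varphi, \varphi^{-1}) c(\varphi, \varphi^{-1} \cdot x) c(\varphi^{-1}, x)$, and since $m$ takes values in $\mathbb T$ both cocycle factors must be non-zero; specializing $x = (z, z)$ gives the claim. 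The only technical step is the Leibniz/Faa di Bruno induction in (i), but no substantive obstacle is anticipated.
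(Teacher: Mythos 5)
Your part (i) follows the paper's proof essentially verbatim: show $\mathcal{M}_m$ is $\pi$-invariant via Leibniz plus Fa\`a di Bruno and the fact that $\varphi(\Delta)=\Delta$, then use that invariant subspaces of a projective unitary representation are reducing, so the orthogonal differences $\mathcal{S}_m$ are reducing. That part is correct and needs no comment.

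Part (ii) is where you diverge, and there is a genuine gap. You dismiss (ii) by appealing to the word ``non-vanishing'' in the definition of a multiplier representation. But the values of $c(\varphi,\cdot)$ on the diagonal $\Delta$ are exactly the values that are \emph{not} determined by the identity $(\pi(\varphi^{-1})f)(\bz)=c(\varphi,\bz)f(\varphi(\bz))$ whenever every $f\in\mathcal{H}$ vanishes on $\Delta$ --- which is precisely the situation in the anti-symmetric case that Theorem \ref{thm3}(ii) must cover. There $c(\varphi,\cdot)|_{\Delta}$ is only fixed by holomorphic continuation from off the diagonal, and since $\Delta$ is a complex hypersurface a holomorphic function can perfectly well vanish on part of it; so non-vanishing on $\Delta$ is a substantive claim, which is why the paper proves it and then quotes it as a proved fact in Lemma \ref{lem4} and Proposition \ref{thm1}. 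Your backup argument inherits the same defect at its first step: $c(1,x)=1$ ``as a consequence of $\pi(1)=I$'' requires some $f\in\mathcal{H}$ with $f(x)\neq 0$, which fails for $x\in\Delta$ in the anti-symmetric case, and the cocycle identity \eqref{eqn: in2} alone only yields $c(1,x)^2=c(1,x)$, leaving the value $0$ open --- in which case your displayed identity reads $0=m(\varphi,\varphi^{-1})\,c(\varphi,\varphi^{-1}\cdot x)\,c(\varphi^{-1},x)$ and proves nothing. (The argument is repairable: $c(1,\cdot)$ is holomorphic and $\{0,1\}$-valued on the connected set $\D^2$, hence identically $1$ unless $\mathcal{H}=\{0\}$; but that repair is not in your write-up.) The paper instead argues by contradiction: a single zero $c(\varphi_0,(z_0,z_0))=0$ propagates, via the cocycle identity, first to all of M\"ob at $(z_0,z_0)$ and then, by transitivity of M\"ob on $\Delta$, to all of $\Delta$; the Leibniz/Fa\`a di Bruno computation then forces $\pi(\varphi^{-1})f\in\mathcal{M}_m$ for every $f\in\mathcal{S}_m$, so by part (i) $\pi(\varphi^{-1})f=0$, contradicting unitarity and non-triviality of $\pi$. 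You should adopt that argument, or at least supply the holomorphic-continuation repair, rather than reading (ii) off the definition.
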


\begin{proof}
(i) Since $\pi$ is a projective unitary representation, it follows that a subspace is an invariant subspace of $\pi$ if and only if it is a reducing subspace of $\pi$. Thus, it is enough to prove that $\mathcal{M}_m$ is an invariant subspace of $\pi$ for every $m \geq 0$. Let $f \in \mathcal{M}_m$. For $\varphi \in \mbox{M\"ob}$ and $0 \leq i \leq m$, we have
\begin{equation}\label{eq:4}
 \partial_1^i \left(\pi(\varphi^{-1})f\right)(\bz) = \displaystyle \sum_{j = 0}^i \binom{i}{j} \partial_1^{i - j}c(\varphi, \bz) \partial_1^j(f \circ \varphi)(\bz),\,\,\bz \in \D^2.
\end{equation}
By the Fa\`a di Bruno's formula, we have
\begin{equation*}
\partial_1^j(f \circ \varphi)(\bz) = \sum \frac{j!}{l_1!  l_2! \cdots l_j!} \partial_1^{l_1 + \cdots + l_j} f(\varphi(\bz)) \displaystyle \prod_{k=1}^j \left( \frac{\varphi^{(k)}(z_1)}{k!}\right)^{l_k},
\end{equation*}
where the sum is over all $j$-tuples of non-negative integers $(l_1, \ldots, l_j)$ such that $1 \cdot l_1 + 2 \cdot l_2 + \cdots + j\cdot l_j = j$.

Since $f \in \mathcal{M}_m$ and $\varphi(\Delta) = \Delta$, it follow that if $\bz \in \Delta$, then $\left(\partial_1^k f\right)(\varphi(\bz)) = 0$ for $0 \leq k \leq m$. Thus, if $0 \leq j \leq m$ and $\bz \in \Delta$, then we have $\partial_1^{l_1 + \cdots + l_j}f(\varphi(\bz)) = 0$ for all $j$-tuples of non-negative integers $(l_1,\ldots, l_j)$ with $1 \cdot l_1 + 2 \cdot l_2 + \cdots + j\cdot l_j = j$. Therefore, from the Fa\`a di Bruno's formula, we have $\partial_1^j(f \circ \varphi)(\bz) = 0$ for every $\bz \in \Delta$. Now from Equation \eqref{eq:4}, it follows that $\partial_1^i \left(\pi(\varphi^{-1})f\right)|_{\Delta} = 0$. This proves that $\mathcal{M}_m$ is an invariant subspace of the representation $\pi$ for each $k \geq 0$.

(ii) Assume that there exist $z_0 \in \mathbb D$ and $\varphi_0 \in$ M\"ob such that $c(\varphi_0, (z_0, z_0)) = 0$. Let $\psi$ be an arbitrary element in M\"ob. Then, we have 
$$c(\psi, (z_0, z_0)) = c(\psi \circ \varphi_0^{-1} \circ \varphi_0, (z_0, z_0)) = c(\psi \circ \varphi_0^{-1}, (\varphi_0(z_0), \varphi_0(z_0)))c(\varphi_0, (z_0, z_0)) = 0.$$
Thus, for every $\psi$ in M\"ob, $c(\psi, (z_0, z_0)) = 0$. 

Take an arbitrary $\varphi$ in M\"ob and an arbitrary $z \in \mathbb D$. Suppose $\tilde{\psi} \in$ M\"ob be such that $\tilde{\psi}(z) = z_0$. Then, we have
$$c(\varphi, (z,z)) = c(\varphi \circ \tilde{\psi}^{-1} \circ \tilde{\psi}, (z,z)) = c(\varphi \circ \tilde{\psi}^{-1}, (z_0, z_0))c(\tilde{\psi}, (z,z)) = 0.$$
Thus, we have proved that $c(\varphi, (z, z)) = 0$ for every $\varphi \in$ M\"ob and $z \in \mathbb D$.

Now, we prove that $\pi(\varphi) = 0$ for every $\varphi \in$ M\"ob. Due to part (i), it is enough to prove that $\pi(\varphi)|_{\mathcal{S}_m} = 0$ for every $\varphi \in$ M\"ob. Let $f \in \mathcal{S}_m$ be an arbitrary element. For $\varphi \in \mbox{M\"ob}$ and $z \in \mathbb D$, we have
\begin{flalign*}\label{neweq:12}
\nonumber \partial_1^m \left(\pi(\varphi^{-1}) f \right)(z,z)
\nonumber &= \partial_1^m \left(c(\varphi, (z,z)) (f \circ \varphi)(z,z)\right)\\
\nonumber &= \displaystyle \sum_{i = 0}^m \partial_1^{m-i} c(\varphi, (z,z)) \partial_1^i (f \circ \varphi)(z,z)\\
&= c(\varphi, (z,z)) \partial_1^m (f \circ \varphi)(z,z)\\
&= 0.
\end{flalign*}
The third equality follows from the fact that $f \in \mathcal{S}_m$ and therefore, $\partial_1^i f|_{\Delta} = 0$ for $0 \leq i \leq m-1$. As a consequence of the first part, we obtain that $\pi(\varphi^{-1})f = 0$. This is a contradiction, since $\pi$ is assumed to be a non-trivial representation.
\end{proof}

We now realize the Hilbert space $\mathcal{S}_m$ as a Hilbert space consisting of holomorphic functions over $\mathbb D$ and provide a formal expression for the reproducing kernel of $\mathcal{S}_m$. In order to do so, we need a few technical lemmas. Note that the set $\left\lbrace \overbar{\partial_1}^i K(\cdot, \bw) : 0 \leq i \leq m,\,\,\bw \in \Delta\right\rbrace$ is dense in $\mathcal{M}_m^{\perp}$ for each $m \geq 0$. Let $P_m$ denote the orthogonal projection onto $\mathcal{S}_m$. The following lemma gives us a spanning set of $\mathcal{S}_m$.

\begin{lem}\label{lem1}
For each $m \geq 0$, the set $\left\lbrace P_m\overbar{\partial_1}^m K(\cdot, \bw) : \bw \in \Delta\right\rbrace$ is dense in $\mathcal{S}_m$.
\end{lem}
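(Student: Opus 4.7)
The plan is to prove density by the standard orthogonal complement argument: show that any $f \in \mathcal{S}_m$ which is orthogonal to every $P_m \overbar{\partial_1}^m K(\cdot, \bw)$ with $\bw \in \Delta$ must vanish.

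The first step is to transfer the projection onto $f$. Since $P_m$ is self-adjoint and $f \in \mathcal{S}_m$ satisfies $P_m f = f$, the vanishing of all inner products $\langle f, P_m \overbar{\partial_1}^m K(\cdot, \bw)\rangle$ becomes the vanishing of $\langle f, \overbar{\partial_1}^m K(\cdot, \bw)\rangle$ for every $\bw \in \Delta$. The reproducing property (applied $m$ times in the first variable) then identifies this inner product with $\partial_1^m f(\bw)$, so we deduce $\partial_1^m f|_{\Delta} = 0$.

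Next, I would combine this with the containment $\mathcal{S}_m \subseteq \mathcal{M}_{m-1}$ (interpreting $\mathcal{M}_{-1} := \mathcal{H}$ when $m = 0$), which gives $\partial_1^i f|_\Delta = 0$ for $0 \leq i \leq m-1$. Together with the newly obtained $\partial_1^m f|_\Delta = 0$, this places $f$ in $\mathcal{M}_m$. But $\mathcal{S}_m = \mathcal{M}_{m-1} \ominus \mathcal{M}_m$, so $f \in \mathcal{S}_m \cap \mathcal{M}_m = \{0\}$, and density follows.

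I do not expect a serious obstacle here; the only care-point is using the right convention for $\overbar{\partial_1}^m K(\cdot, \bw)$ — namely that it represents the bounded functional $f \mapsto \partial_1^m f(\bw)$ — and that the density of $\{\overbar{\partial_1}^i K(\cdot, \bw) : 0 \leq i \leq m,\ \bw \in \Delta\}$ in $\mathcal{M}_m^\perp$, stated just before the lemma, is exactly the dual statement that $\mathcal{M}_m$ is cut out by these functionals. The rest is just bookkeeping with the orthogonal decomposition $\mathcal{M}_{m-1} = \mathcal{S}_m \oplus \mathcal{M}_m$.
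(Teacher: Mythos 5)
Your proposal is correct and follows essentially the same orthogonal-complement argument as the paper: transfer $P_m$ across the inner product, use the reproducing property to get $\partial_1^m f|_\Delta = 0$, and combine with $\mathcal{S}_m \subseteq \mathcal{M}_{m-1}$ to conclude $f \in \mathcal{M}_m$, hence $f = 0$. The only cosmetic difference is that the paper treats $m = 0$ as a separate (direct) case, whereas you absorb it via the convention $\mathcal{M}_{-1} = \mathcal{H}$.
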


\begin{proof}
First, assume that $m = 0$. Since $K(\cdot, \bw) \in \mathcal{S}_0 = \mathcal{M}_0^\perp$ for every $\bw \in \Delta$, it follows that $P_0K(\cdot, \bw) = K(\cdot, \bw)$ for every $\bw \in \Delta$. Also, it is easy to verify that the set $\left\lbrace K(\cdot, \bw) : \bw \in \Delta\right\rbrace$ is dense in $\mathcal{S}_0$.

Now assume that $m \geq 1$. Let $f \in \mathcal{S}_m$ be such that $\left\langle f, P_m\overbar{\partial_1}^m K(\cdot, \bw) \right\rangle = 0$ for every $\bw \in \Delta$. Since $\left\langle f, P_m\overbar{\partial_1}^m K(\cdot, \bw) \right\rangle = \left\langle f, \overbar{\partial_1}^m K(\cdot, \bw) \right\rangle = \partial_1^mf(\bw)$, it follows that $\partial_1^mf(\bw) = 0$ for every $\bw \in \Delta$.

Recall that $f \in \mathcal{S}_m \subseteq \mathcal{M}_{m-1}$ and thus, we have $\partial_1^i f|_{\Delta} = 0$ for $0 \leq i \leq m-1$. From the above discussion, it follows that $\partial_1^i f|_{\Delta} = 0$ for $0 \leq i \leq m$ and therefore $f \in \mathcal{M}_m$. This implies that $f = 0$. This proves that the set $\left\lbrace P_m\overbar{\partial_1}^m K(\cdot, \bw) : \bw \in \Delta\right\rbrace$ is dense in $\mathcal{S}_m$.
\end{proof}

Note that there is no guarantee for a subspace $\mathcal{S}_m$ being non-zero. However, the following lemma describes a criteria for the non-emptiness of $\mathcal{S}_m$.

\begin{lem}\label{lem4}
For $m \geq 0$, the subspace $\mathcal{S}_m$ is non-zero if and only if $P_m \overbar{\partial_1}^m K(\cdot, (0,0)) \neq 0$.
\end{lem}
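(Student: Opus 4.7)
The plan is to combine Lemma \ref{lem1} with a transitivity argument using the action of $\pi$. For convenience, set $e_m(\bw) := P_m \overbar{\partial_1}^m K(\cdot, \bw) \in \mathcal{S}_m$ for $\bw \in \Delta$. The ``if'' direction is immediate: if $e_m((0,0)) \neq 0$, then $\mathcal{S}_m$ contains this non-zero vector, so $\mathcal{S}_m \neq 0$. For the ``only if'' direction, Lemma \ref{lem1} produces some $w_0 \in \mathbb D$ with $e_m((w_0, w_0)) \neq 0$, and it remains to transport this non-vanishing to the origin via the M\"obius action.

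The crucial step is the covariance identity
\[
\pi(\varphi^{-1})^* e_m((w,w)) \;=\; \overline{c(\varphi, (w,w))\,(\varphi'(w))^m}\; e_m((\varphi(w), \varphi(w))),\qquad w \in \mathbb D,\ \varphi \in \mbox{M\"ob}.
\]
I would prove this by pairing both sides against an arbitrary $f \in \mathcal{S}_m$: since $e_m(\bw)$ represents the functional $f \mapsto \partial_1^m f(\bw)$ on $\mathcal{S}_m$, the pairing reduces to computing $\partial_1^m\bigl(c(\varphi,\cdot)(f\circ\varphi)\bigr)\bigl|_{(w,w)}$. Expanding via the Leibniz rule and the Fa\`a di Bruno formula, and using that $f \in \mathcal{S}_m \subseteq \mathcal{M}_{m-1}$ kills $\partial_1^i f|_\Delta$ for every $i \leq m-1$, the only surviving contribution on $\Delta$ comes from the ``fully diagonal'' partition $l_1 = m,\ l_2 = \cdots = l_m = 0$, yielding exactly $c(\varphi, (w,w))\,(\varphi'(w))^m\, \partial_1^m f(\varphi(w), \varphi(w))$, which is the scaled evaluation encoded by the right-hand side.

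With the covariance identity in hand, I would conclude by choosing $\varphi \in$ M\"ob with $\varphi(w_0) = 0$. By Proposition \ref{prop1}(ii), $c(\varphi, (w_0, w_0)) \neq 0$, and $\varphi'(w_0) \neq 0$ since $\varphi$ is a biholomorphism. Since $\mathcal{S}_m$ is a reducing subspace for $\pi$ by Proposition \ref{prop1}(i), $\pi(\varphi^{-1})^*$ restricts to a unitary on $\mathcal{S}_m$ and therefore sends the non-zero vector $e_m((w_0,w_0))$ to a non-zero vector; the covariance identity then forces $e_m((0,0)) \neq 0$. The main technical obstacle is the derivation of the covariance identity, but the combinatorial collapse on $\Delta$ -- only the diagonal partition survives once $\partial_1^i f|_\Delta = 0$ for $i \leq m-1$ -- is essentially the calculation already performed in the proof of Proposition \ref{prop1}(i), so extracting it should be routine.
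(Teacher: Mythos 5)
Your proposal is correct and rests on exactly the same ingredients as the paper's proof: the collapse of the Leibniz/Fa\`a di Bruno expansion of $\partial_1^m\bigl(c(\varphi,\cdot)(f\circ\varphi)\bigr)$ on $\Delta$ to the single term $c(\varphi,(w,w))(\varphi'(w))^m\,\partial_1^m f(\varphi(w),\varphi(w))$, the non-vanishing of $c$ on the diagonal from Proposition \ref{prop1}(ii), and the transitivity of the M\"ob action on $\Delta$. The only difference is packaging: the paper argues contrapositively by testing against functions $f\in\mathcal{S}_m$ to show $\partial_1^m f|_\Delta=0$, whereas you dualize the same computation into a covariance identity for the projected kernel vectors and transport non-vanishing from $w_0$ to the origin; both are valid and essentially identical.
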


\begin{proof}
Converse direction of the statement is straightforward. So, we prove the forward direction only. Assume that $P_m \overbar{\partial_1}^m K(\cdot, (0,0)) = 0.$ Let $f \in \mathcal{S}_m$. Then, we have $\partial_1^j f|_{\Delta} = 0$, $0 \leq j \leq m-1$. For $\varphi \in$ M\"ob, we have 
\begin{flalign*}
0=\left\langle \pi(\varphi^{-1})f, P_m \overbar{\partial_1}^m K(\cdot, (0,0))\right\rangle &= \left\langle \pi(\varphi^{-1})f, \overbar{\partial_1}^m K(\cdot, (0,0))\right\rangle\\
&= \left\langle c(\varphi, \cdot) f \circ \varphi, \overbar{\partial_1}^m K(\cdot, (0,0))\right\rangle\\
&= \partial_1^m \left(c(\varphi, (0,0)) (f \circ \varphi)(0,0)\right)\\
&= \displaystyle \sum_{i = 0}^m \partial_1^{m-i} c(\varphi, (0,0)) \partial_1^i (f \circ \varphi)(0,0).
\end{flalign*}
Since $(\varphi(0), \varphi(0))$ is in the diagonal set $\Delta$, an application of the Fa\`a di Bruno's formula implies that $\partial_1^i (f \circ \varphi)(0,0) = 0$ for every $0 \leq i\leq m-1$.

Now, expanding the term $\partial_1^m (f \circ \varphi)(0,0)$ using Fa\`a di Bruno's formula, we have 
\begin{equation}\label{eq:22}
\partial_1^m(f \circ \varphi)(0,0) = \sum \frac{m!}{l_1!  l_2! \cdots l_m!} \partial_1^{l_1 + \cdots + l_m} f(\varphi(0),\varphi(0)) \displaystyle \prod_{k=1}^m \left( \frac{\varphi^{(k)}(0)}{k!}\right)^{l_k},
\end{equation}
where the sum is over all $m$-tuples of non-negative integers $(l_1, \ldots, l_m)$ such that $1 \cdot l_1 + 2 \cdot l_2 + \cdots + m\cdot l_m = m$.

Since $f \in \mathcal{S}_m$, we have $\partial_1^{l_1 + \cdots + l_m} f(\varphi(0),\varphi(0)) = 0$ if $l_1 + \cdots + l_m < m$. Furthermore, if $(l_1, \ldots, l_m)$ is a tuple of non-negative integers such that $l_1 + \cdots + l_m = m$ and $1 \cdot l_1 + 2 \cdot l_2 + \cdots + m\cdot l_m = m$, then $l_1 = m$ and $l_i = 0$ for $2 \leq i \leq m$. Thus, form Equation \eqref{eq:22}, we have $\partial_1^m(f \circ \varphi)(0,0) = (\varphi'(0))^m \partial_1^mf(\varphi(0),\varphi(0))$ and therefore 
$$0=\displaystyle \sum_{i = 0}^m \partial_1^{m-i} c(\varphi, (0,0)) \partial_1^i (f \circ \varphi)(0,0) = c(\varphi, (0,0))(\varphi'(0))^m \left(\partial_1^mf\right)(\varphi(0),\varphi(0)).$$
By the part (ii) of Proposition \ref{prop1}, we have $c(\varphi, (0,0)) \neq 0$. Thus, from the previous equation, it follows that $\left(\partial_1^m f\right)(\varphi(0),\varphi(0)) = 0$. Note that the action of M\"ob on the diagonal set $\Delta$ is transitive. Thus, we have $\partial_1^m f|_{\Delta} = 0$ and therefore $f \in \mathcal{M}_m$. This proves that $f = 0$.
\end{proof}

\begin{rem}\label{rem1}
(i) A possible application of the description of all the multiplier representations of M\"ob on reproducing kernel Hilbert spaces consisting of holomorphic functions over $\D^2$ is to describe all the pairs of M\"ob - homogeneous operators in $B_1(\D^2)$. Therefore, it is natural to assume that the set of all polynomials over $\D^2$ lies in the Hilbert space. 

(ii) If $\mathcal{H}$ contains the set of all polynomials over $\D^2$, then it is easy to see that $\mathcal{S}_m \neq \{0\}$ for every $m \geq 0.$ 
\end{rem}

For $m \geq 0$, let $\Gamma_m : \mathcal{S}_m \to \mbox{Hol}(\D, \C)$ be the map defined by 
\begin{equation}\label{eq:5}
\Gamma_m f = \partial_1^m f|_{\Delta},\,\,f \in \mathcal{S}_m.
\end{equation}
Evidently, the map $\Gamma_m$ is an injective linear map. Let $\mathcal{A}_m := \mbox{Im}\,\, \Gamma_m$. Define an inner product on $\mathcal{A}_m$ such that $\Gamma_m : \mathcal{S}_m \to \mathcal{A}_m$ is unitary. 
\begin{prop}\label{prop2}
For each $m \geq 0$, the Hilbert space $\mathcal{A}_m$ is a reproducing kernel Hilbert space with the reproducing kernel $K_m : \D \times \D \to \C$, defined by,
\begin{equation}\label{eq:18}
K_m(z, w) = \left\langle \Gamma_m P_m \overbar{\partial_1}^m K(\cdot, (w,w)), \Gamma_m P_m \overbar{\partial_1}^m K(\cdot, (z,z))\right\rangle,\,\,z, w \in \D.
\end{equation}
\end{prop}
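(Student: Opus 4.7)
The plan is to establish that $\mathcal{A}_m$ carries bounded evaluation functionals and then to identify the reproducing kernel by transporting a natural element of $\mathcal{S}_m$ through the unitary $\Gamma_m$.

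First, I would rewrite evaluation at a point of $\mathcal{A}_m$ as an inner product on $\mathcal{S}_m$. For $g = \Gamma_m f \in \mathcal{A}_m$ with $f \in \mathcal{S}_m$, differentiating the reproducing property $f(w_1, w_2) = \langle f, K(\cdot, (w_1, w_2))\rangle_{\mathcal{H}}$ in $\overline{w_1}$ and restricting to $w_1 = w_2 = z$ gives
\begin{equation*}
g(z) = (\partial_1^m f)(z,z) = \langle f, \overbar{\partial_1}^m K(\cdot, (z,z))\rangle_{\mathcal{H}}.
\end{equation*}
Since $f \in \mathcal{S}_m$ and $P_m$ is self-adjoint, this equals $\langle f, k_z\rangle_{\mathcal{S}_m}$, where I set $k_z := P_m \overbar{\partial_1}^m K(\cdot, (z,z)) \in \mathcal{S}_m$. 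Boundedness of the evaluation functional on $\mathcal{A}_m$, and hence the RKHS structure, then follows from Cauchy--Schwarz together with the definition of the inner product on $\mathcal{A}_m$ (chosen so that $\Gamma_m$ is unitary).

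Next, I would exploit the unitarity of $\Gamma_m$ to identify the kernel. Indeed,
\begin{equation*}
g(z) = \langle f, k_z\rangle_{\mathcal{S}_m} = \langle \Gamma_m f, \Gamma_m k_z\rangle_{\mathcal{A}_m} = \langle g, \Gamma_m k_z\rangle_{\mathcal{A}_m},
\end{equation*}
which says that $\Gamma_m k_z$ is the reproducing kernel section of $\mathcal{A}_m$ at $z$. Relabelling $z$ by $w$ and then evaluating at $z$ via the reproducing property yields
\begin{equation*}
K_m(z, w) = (\Gamma_m k_w)(z) = \langle \Gamma_m k_w, \Gamma_m k_z\rangle_{\mathcal{A}_m},
\end{equation*}
which is precisely the formula \eqref{eq:18}.

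I do not anticipate any serious obstacle; the argument is essentially bookkeeping, shuttling between the three inner products on $\mathcal{H}$, $\mathcal{S}_m$ and $\mathcal{A}_m$. The only subtlety is tracking when one is permitted to insert $P_m$ inside an inner product, which is allowed whenever one of the two factors already lies in $\mathcal{S}_m$; this is what lets us replace the ambient kernel section $\overbar{\partial_1}^m K(\cdot, (z,z)) \in \mathcal{H}$ by its projection $k_z \in \mathcal{S}_m$ without altering the value of $g(z)$.
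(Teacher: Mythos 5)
Your argument is correct and follows essentially the same route as the paper: identify the candidate kernel section at $w$ as $\Gamma_m P_m \overbar{\partial_1}^m K(\cdot,(w,w))$, use the reproducing property of $K$ under $\overbar{\partial_1}^m$ together with the fact that $f \in \mathcal{S}_m$ to insert $P_m$, and transport through the unitary $\Gamma_m$. The only cosmetic difference is that you make the boundedness of the evaluation functionals explicit via Cauchy--Schwarz, whereas the paper leaves that implicit and instead records (via its Lemma \ref{lem1}) that the kernel sections span a dense subspace of $\mathcal{A}_m$; both are routine and neither affects correctness.
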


\begin{proof}
For $w \in \D$, let $K_{m}(\cdot, w) = \Gamma_m P_m \overbar{\partial_1}^m K(\cdot, (w,w)).$ Suppose $g \in \mathcal{A}_m$. Then there exists $f \in \mathcal{S}_m$ such that $g = \Gamma_m f = \partial_1^m f|_{\Delta}$. For $w \in \D$, we have 
\begin{flalign*}
\left\langle g, K_m(\cdot, w)\right\rangle &= \left\langle \Gamma_m f, \Gamma_m P_m \overbar{\partial_1}^m K(\cdot, (w,w))\right\rangle\\
&= \left\langle f, P_m \overbar{\partial_1}^m K(\cdot, (w,w))\right\rangle\\
&= \left\langle f, \overbar{\partial_1}^m K(\cdot, (w,w))\right\rangle\\
&= \partial_1^m f(w,w)\\
&= g(w).
\end{flalign*}
Here, the second equality holds because the map $\Gamma_m$ is unitary and the third equality holds because $f \in \mathcal{S}_m$.

Also, from Lemma \ref{lem1}, it follows that the set $\left\lbrace K_m(\cdot, w) : w\in \D\right\rbrace$ is dense in $\mathcal{A}_m$. Therefore, $\mathcal{A}_m$ is a reproducing kernel Hilbert space with the reproducing kernel $K_m : \D \times \D \to \C$, defined by Equation \eqref{eq:18}.
\end{proof}

The map $\pi_m : \mbox{M\"ob} \to \mathcal{U}(\mathcal{A}_m)$, defined by,
\begin{equation}\label{eqn: xx}
\pi_m(\varphi) := \Gamma_m \left(\pi(\varphi)|_{\mathcal{S}_m}\right) \Gamma_m^*,\,\,\varphi \in \mbox{M\"ob}.    
\end{equation}
is a unitary representation of M\"ob on the reproducing kernel Hilbert space $\mathcal{A}_m$. We prove that each $\pi_m$ is an irreducible representation. To prove that each $\pi_m$ is an irreducible representation, we need the following lemma which is also given in \cite{KobIrr}. However, for the completeness of the paper, we include a proof of the following lemma.
\begin{lem}\label{lem2}
Let $G$ be group and $\Omega \subseteq \C^n$ be a domain such that $G$ acts on $\Omega$ transitively. Suppose $\rho : G \to \mathcal{U}(\mathcal{B})$ is a multiplier representation on a reproducing kernel Hilbert space $\mathcal{B}$ consisting of scalar valued holomorphic functions on $\Omega$. Then the representation $\rho$ is irreducible. 
\end{lem}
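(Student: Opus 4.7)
The plan is to run a standard reproducing-kernel argument based on the transitive action of $G$ and the rigidity of sesqui-analytic functions. Suppose $\mathcal{M}$ is a reducing subspace of $\rho$, and let $P$ denote the orthogonal projection of $\mathcal{B}$ onto $\mathcal{M}$. Because $\mathcal{M}$ is reducing, $P$ commutes with every $\rho(g)$. Let $K:\Omega\times\Omega\to\mathbb{C}$ be the reproducing kernel of $\mathcal{B}$ and define
\[
\phi(z,w):=(PK(\cdot,w))(z)=\langle PK(\cdot,w),K(\cdot,z)\rangle,\qquad z,w\in\Omega,
\]
which is the reproducing kernel of $\mathcal{M}$ (regarded as a reproducing kernel Hilbert subspace of $\mathcal{B}$).

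First I would establish the transformation laws for $K$ and $\phi$ under $\rho$. Using $(\rho(g^{-1})f)(z)=c(g,z)f(g\cdot z)$ and taking adjoints, one computes $\rho(g^{-1})^{*}K(\cdot,z)=\overline{c(g,z)}\,K(\cdot,g\cdot z)$. Inserting the unitary $\rho(g^{-1})^{*}$ into the inner product defining $K(z,w)$ and $\phi(z,w)$ yields
\[
K(g\cdot z,g\cdot w)=\frac{K(z,w)}{c(g,z)\,\overline{c(g,w)}},\qquad \phi(g\cdot z,g\cdot w)=\frac{\phi(z,w)}{c(g,z)\,\overline{c(g,w)}},
\]
where for the second identity one pushes $\rho(g^{-1})^{*}$ past $P$ (this is where the reducibility of $\mathcal{M}$ is used). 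The projective character of $\rho$ only contributes unimodular factors that cancel in these computations.

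Taking the quotient gives $\phi(g\cdot z,g\cdot z)/K(g\cdot z,g\cdot z)=\phi(z,z)/K(z,z)$ for every $g\in G$ and $z\in\Omega$. Since $G$ acts transitively on $\Omega$, this ratio is a constant $\alpha\in\mathbb{C}$, so $\phi(z,z)=\alpha\,K(z,z)$ for all $z\in\Omega$. The functions $\phi(z,w)$ and $K(z,w)$ are holomorphic in $z$ and anti-holomorphic in $w$; two such sesqui-analytic functions that agree on the diagonal must agree everywhere by polarisation. Hence $\phi(z,w)=\alpha\,K(z,w)$ for all $z,w\in\Omega$, which means $PK(\cdot,w)=\alpha K(\cdot,w)$ for every $w$.

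To conclude, apply $P$ once more: $P^{2}K(\cdot,w)=\alpha PK(\cdot,w)=\alpha^{2}K(\cdot,w)$, while $P^{2}=P$ gives $\alpha K(\cdot,w)=\alpha^{2}K(\cdot,w)$. Consequently $\alpha\in\{0,1\}$, that is, either $PK(\cdot,w)=0$ for every $w$, forcing $P=0$ on the dense span $\{K(\cdot,w):w\in\Omega\}$, or $PK(\cdot,w)=K(\cdot,w)$ for every $w$, forcing $P=I$. Hence $\mathcal{M}\in\{\{0\},\mathcal{B}\}$ and $\rho$ is irreducible. The main technical point is justifying that the commutation of $P$ with the projective operators $\rho(g)$ yields the displayed transformation rule for $\phi$; once this is in place, the transitivity and the sesqui-analytic polarisation close the argument.
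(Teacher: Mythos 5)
Your proposal is correct and follows essentially the same route as the paper: derive the transformation rule $L(z,w)=c(g,z)L(g\cdot z,g\cdot w)\overline{c(g,w)}$ for both the ambient kernel and the kernel of the reducing subspace, use transitivity to see that their ratio on the diagonal is constant, and polarise to conclude the subspace kernel is a scalar multiple of $K$. Your extra step showing $\alpha^{2}=\alpha$ via $P^{2}=P$ is a slightly cleaner way to finish than the paper's direct dichotomy on $\alpha$, but it is the same argument in substance.
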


\begin{proof}
Let $L : \Omega \times \Omega \to \C$ be the reproducing kernel of $\mathcal{B}$ and $c_\rho : G \times \Omega \to \C$ be the corresponding cocycle of $\rho$. Since $\mathcal{B}$ consists of holomorphic functions, it follows that $L$ is holomorphic in first variable and anti-holomorphic in second variable. 

For $f \in \mathcal{B}$, we have $(\rho(g^{-1})f)(\bz) = c_\rho(g, \bz)f(g\cdot\bz),$ $g \in G$, $\bz \in \Omega$. A straightforward calculation shows that 
\begin{equation}\label{eq:6}
\rho(g^{-1})^* L(\cdot, \bw) = \overbar{c_\rho(g, \bw)} L(\cdot, g\cdot \bw),
\end{equation}
for every $g \in G$ and $\bw \in \Omega$.
Note that $\rho(g^{-1})$ is a unitary operator. Therefore, it follows from Equation \eqref{eq:6} that the reproducing kernel $L$ satisfies the following transformation rule
\begin{equation}\label{eq:7}
L(\bz, \bw) = c_\rho(g, \bz) L(g\cdot \bz, g \cdot \bw) \overbar{c_\rho(g, \bw)},\,\,g \in G,\,\,\bz, \bw \in \Omega. 
\end{equation}
Without loss of generality, assume that $0 \in \Omega$. Since the action of $G$ is transitive on $\Omega$, for every $\bz \in \Omega$, there exists $g_{\bz} \in G$ such that $g_{\bz} \cdot \bz = 0$. Substituting $\bz = \bw$ and then replacing $g$ by $g_{\bz}$ in Equation \eqref{eq:7}, we obtain
\begin{equation}\label{eq:8}
L(\bz, \bz) = c_\rho(g_{\bz}, \bz) L(0,0) \overbar{c_\rho(g_{\bz}, \bz)},\,\,\bz \in \Omega. 
\end{equation}
Let $\mathcal{M}$ be a closed reducing subspace of the representation $\rho$. Since $\mathcal{M}$ is a closed subspace of a reproducing kernel Hilbert space, it follows that $\mathcal{M}$ is also a reproducing kernel Hilbert space. Let $L_{\mathcal{M}}$ be the reproducing kernel of $\mathcal{M}$. Since $\rho : G \to \mathcal{U}(\mathcal{M})$ is a unitary representation, the reproducing kernel $L_{\mathcal{M}}$ also satisfies the following equation
\begin{equation}\label{eq:9}
L_{\mathcal{M}}(\bz, \bz) = c_\rho(g_{\bz}, \bz) L_{\mathcal{M}}(0,0) \overbar{c_\rho(g_{\bz}, \bz)},\,\,\bz \in \Omega. 
\end{equation} 
From Equations \eqref{eq:8} and \eqref{eq:9}, it follows that if $\alpha = \frac{L_{\mathcal{M}}(0,0)}{L(0,0)}$, then 
\begin{equation*}\label{eq:10}
L_{\mathcal{M}}(\bz, \bz) = \alpha L(\bz, \bz),\,\,\bz \in \Omega. 
\end{equation*}
Polarizing the above equation, we have $L_{\mathcal{M}}(\bz, \bw) = \alpha L(\bz, \bw),\,\,\bz, \bw \in \Omega$. If $\alpha = 0$, then $\mathcal{M} = \{0\}$ and if $\alpha > 0$, then $\mathcal{M} = \mathcal{B}$. This proves that $\rho$ is irreducible.
\end{proof}

Employing the aforementioned lemma, we first prove that each of the representations $\pi_m$, defined in Equation \eqref{eqn: xx}, is an irreducible multiplier representation. We also derive the reproducing kernel for the Hilbert space $\mathcal{A}_m$.


\begin{prop}\label{thm1}
For every $m \geq 0$, the representation $\pi_m$ is a multiplier representation on the reproducing kernel Hilbert space $\mathcal{A}_m$ with the associated cocycle $c_m : \mbox{M\"ob} \times \D \to \C$, defined by $c_m(\varphi, z) = c(\varphi, (z,z))\left(\varphi'(z)\right)^m,\,\,\varphi \in \mbox{M\"ob},\,\,z \in \D$. Each of the representations $\pi_m$ is irreducible. The reproducing kernel $K_m$ of the Hilbert space $\mathcal{A}_m$ is obtained by polarizing the following expression
\begin{equation*}\label{eq:11}
c_m(\varphi_z, z)K_m(0,0) \overbar{c_m(\varphi_z, z)},
\end{equation*} 
where $\varphi_z \in$ M\"ob, defined by $\varphi_z(w) = \frac{z-w}{1 - \bar z w},\,\,w \in \mathbb D$, is the unique involution mapping $z$ to $0$.
\end{prop}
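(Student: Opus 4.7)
The plan is to verify the three assertions in order. First I would compute the action of $\pi_m(\varphi^{-1})$ on a generic element $g = \Gamma_m f \in \mathcal{A}_m$ (with $f \in \mathcal{S}_m$) by tracing through the definition: by the unitarity of $\Gamma_m$ and the invariance $\pi(\varphi^{-1}) \mathcal{S}_m \subseteq \mathcal{S}_m$ from Proposition \ref{prop1}(i), we have $\pi_m(\varphi^{-1}) g = \Gamma_m \pi(\varphi^{-1}) f = \partial_1^m(\pi(\varphi^{-1})f)|_{\Delta}$. Expanding via Equation \eqref{eq:4} gives a Leibniz sum involving $\partial_1^{m-i} c(\varphi,\cdot)$ and $\partial_1^i(f \circ \varphi)$, and since $f \in \mathcal{S}_m$, the Faà di Bruno analysis already carried out in the proofs of Proposition \ref{prop1} and Lemma \ref{lem4} shows that on $\Delta$ the terms with $0 \le i \le m-1$ vanish and the single term at $i = m$ collapses (because $1 \cdot l_1 + \cdots + m \cdot l_m = m$ together with $l_1 + \cdots + l_m = m$ forces $l_1 = m$ and $l_k = 0$ otherwise) to $(\varphi'(z))^m (\partial_1^m f)(\varphi(z),\varphi(z))$. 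Combining these yields
\begin{equation*}
(\pi_m(\varphi^{-1})g)(z) = c(\varphi,(z,z))(\varphi'(z))^m \, g(\varphi(z)) = c_m(\varphi, z)\, g(\varphi(z)),
\end{equation*}
which identifies $\pi_m$ as a multiplier representation with cocycle $c_m$.

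Next I would deduce irreducibility of $\pi_m$ as an immediate application of Lemma \ref{lem2}: the group M\"ob acts transitively on the domain $\D$ (for any $z \in \D$ the involution $\varphi_z$ sends $z$ to $0$), $\mathcal{A}_m$ is by Proposition \ref{prop2} a reproducing kernel Hilbert space of holomorphic functions on $\D$, and by the previous step $\pi_m$ is a multiplier representation on it. The hypotheses of Lemma \ref{lem2} are therefore met and the conclusion follows directly.

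Finally, for the formula of the reproducing kernel, I would invoke the transformation rule derived inside the proof of Lemma \ref{lem2}: applied to $\rho = \pi_m$ with cocycle $c_{\pi_m} = c_m$, the kernel $K_m$ satisfies
\begin{equation*}
K_m(z,w) = c_m(\varphi, z)\, K_m(\varphi(z), \varphi(w))\, \overline{c_m(\varphi, w)}
\end{equation*}
for every $\varphi \in$ M\"ob and $z, w \in \D$. Setting $w = z$ and choosing $\varphi = \varphi_z$ (so $\varphi_z(z) = 0$) gives $K_m(z,z) = c_m(\varphi_z, z) K_m(0,0) \overline{c_m(\varphi_z, z)}$, and polarization in $(z, w)$ yields the stated expression. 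I would also remark that $K_m(0,0) \neq 0$ thanks to Lemma \ref{lem4} combined with Remark \ref{rem1}(ii) (when $\mathcal{H}$ contains the polynomials), so the formula is non-degenerate.

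The only real obstacle is the combinatorial computation in the first step, but this is essentially the same Faà di Bruno truncation argument used twice already in Section \ref{section 2}; the remaining two parts are formal consequences of Lemma \ref{lem2} and its proof, so no genuinely new difficulty is expected.
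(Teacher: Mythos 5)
Your proposal is correct and follows essentially the same route as the paper's proof: the same Leibniz/Fa\`a di Bruno collapse to identify the cocycle $c_m$, irreducibility via Lemma \ref{lem2} using transitivity of M\"ob on $\D$, and the kernel formula from the transformation rule \eqref{eq:7} specialized to $w=z$ and $\varphi=\varphi_z$. The added remark that $K_m(0,0)\neq 0$ under the polynomial hypothesis is a harmless extra not needed for the statement itself.
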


\begin{proof}
Let $g \in \mathcal{A}_m$. Then there exists $f \in \mathcal{S}_m$ such that $g = \Gamma_m f$. For $\varphi \in \mbox{M\"ob}$, we have
\begin{flalign}\label{eq:12}
\nonumber \pi_m(\varphi^{-1}) g &= \Gamma_m \pi(\varphi^{-1}) \Gamma_m^* \Gamma_m f\\
\nonumber &= \Gamma_m \pi(\varphi^{-1})f\\
\nonumber &= \Gamma_m c(\varphi, \cdot) (f \circ \varphi)\\
\nonumber &= \partial_1^m \left(c(\varphi, \cdot) (f \circ \varphi)\right)|_{\Delta}\\
&= \displaystyle \sum_{i = 0}^m \partial_1^{m-i} c(\varphi, \cdot) \partial_1^i (f \circ \varphi)|_{\Delta}
\end{flalign}
Since $f \in \mathcal{S}_m$, we have $\partial_1^j f|_{\Delta} = 0$ for $0 \leq j \leq m-1$. 
Now, following a similar argument as given in the proof of Lemma \ref{lem4}, it follows that the only non-zero term in $\partial_1^m (f \circ \varphi)(z,z)$ is $\left(\left(\partial_1^m f\right) (\varphi(z), \varphi(z))\right) (\varphi'(z))^m$ for $z \in \mathbb D$. Thus, from Equation \eqref{eq:12}, we have 
\begin{equation*}\label{eq:13}
\pi_m (\varphi^{-1}) g(z) = c(\varphi, (z,z))(\varphi'(z))^m g(\varphi(z)),\,\,z \in \D.
\end{equation*}
This proves that $\pi_m$ is a multiplier representation of M\"ob on $\mathcal{A}_m$ with the associated cocycle $c_m : \mbox{M\"ob} \times \D \to \C$, defined by, $c_m(\varphi, z) = c(\varphi, (z,z))\left(\varphi'(z)\right)^m,\,\,\varphi \in \mbox{M\"ob},\,\,z \in \D$.

Since the action of the group M\"ob on the unit disc $\D$ is transitive, the irreducibility of the representation $\pi_m$ follows from Lemma \ref{lem2}.

From the proof of Lemma \ref{lem2}, it follows that the reproducing kernel $K_m$ satisfies the transformation rule \eqref{eq:7}. In particular, the equation
\begin{equation}\label{eq:15}
K_m(z,z) = c_m(\varphi_z, z)K_m(0,0) \overbar{c_m(\varphi_z, z)},\,\,z \in \D
\end{equation}
holds, where $\varphi_z$ is the unique involution of M\"ob which maps $z$ to $0$.
Therefore, the reproducing kernel $K_m$ of the Hilbert space $\mathcal{A}_m$ is obtained by polarizing the expression \eqref{eq:15}.
\end{proof}

The following theorem describes the representation $\pi$ completely by identifying each of the multiplier representations $\pi_m$ of M\"ob into the reproducing kernel Hilbert space $\mathcal{A}_m$.

\begin{thm}\label{thm3}
Let $\pi : \mbox{M\"ob} \to \mathcal{U}(\mathcal{H})$ be a multiplier representation where $\mathcal{H}$ is a reproducing kernel Hilbert space.

(i) If $\mathcal{H}$ consists of holomorphic functions on $\D^2$ and contains all the polynomials, then there exists $\lambda > 0$ such that $\pi \cong \oplus_{m = 0}^{\infty} D^+_{\l + 2m}$.

(ii) If $\mathcal{H}$ consists of symmetric (resp. anti-symmetric) holomorphic functions over $\D^2$ and contains all the symmetric (resp. anti-symmetric) polynomials, then there exists $\lambda > 0$ such that $\pi \cong \oplus_{m = 0}^\infty D^+_{\lambda + 4m}$ (resp. $\pi \cong \oplus_{m=0}^\infty D^+_{\lambda + 4m + 2}$).
\end{thm}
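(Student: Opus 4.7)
The setup preceding the theorem supplies the orthogonal decomposition $\mathcal{H} = \oplus_{m \geq 0} \mathcal{S}_m$ into reducing subspaces of $\pi$, and (via $\Gamma_m$) identifies $\pi|_{\mathcal{S}_m}$ with the irreducible multiplier representation $\pi_m$ of \mbox{M\"ob} on the reproducing kernel Hilbert space $\mathcal{A}_m \subseteq \mbox{Hol}(\D, \C)$, whose cocycle is $c_m(\varphi, z) = c_0(\varphi, z)(\varphi'(z))^m$ with $c_0(\varphi, z) := c(\varphi, (z, z))$. The task is to identify each non-trivial $\pi_m$ with a holomorphic discrete series $D^+_{\lambda + 2m}$ for a single $\lambda > 0$ independent of $m$.

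Classification of the $\pi_m$'s: By Proposition \ref{thm1}, the reproducing kernel $K_m$ satisfies $K_m(z, z) = |c_m(\varphi_z, z)|^2 K_m(0, 0)$ for the involution $\varphi_z$. Since $|\varphi_z'(z)| = (1-|z|^2)^{-1}$, this gives
\begin{equation*}
K_m(z, z) = K_m(0, 0) \, |c_0(\varphi_z, z)|^2 \, (1 - |z|^2)^{-2m}.
\end{equation*}
I claim that $|c_0(\varphi_z, z)|^2$ is of the form $(1-|z|^2)^{-\lambda}$ for a single $\lambda$ independent of $m$. This amounts to the classification of holomorphic projective cocycles of \mbox{M\"ob} on $\D$ up to coboundary: every such cocycle is cohomologous to $(\varphi')^{\lambda/2}$ for a unique $\lambda \in \R$, and positive-definiteness of the resulting kernel $(1-z\bar{w})^{-(\lambda + 2m)}$ on $\D$ (combined with non-triviality of $\pi$) forces $\lambda > 0$. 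Hence $\pi_m \cong D^+_{\lambda + 2m}$ whenever $\mathcal{S}_m \neq \{0\}$. Under the polynomial-containment hypothesis of (i), Remark \ref{rem1}(ii) ensures $\mathcal{S}_m \neq \{0\}$ for every $m \geq 0$, giving $\pi \cong \oplus_{m \geq 0} D^+_{\lambda + 2m}$.

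For (ii), I first determine the surviving $\mathcal{S}_m$'s by a parity analysis. In the coordinates $s = z_1 + z_2$, $d = z_1 - z_2$, a symmetric $f \in \mathcal{H}$ expands as $f = \sum_{j \geq 0} h_j(s) d^{2j}$. Expanding $\partial_1^m = (\partial_s + \partial_d)^m$ and evaluating at $d = 0$ shows that $\partial_1^{2i} f|_\Delta$ is a linear combination of $h_0^{(2i)}, h_1^{(2i-2)}, \ldots, h_i$, while $\partial_1^{2i+1} f|_\Delta$ is a linear combination of the odd-order derivatives $h_0^{(2i+1)}, h_1^{(2i-1)}, \ldots, h_i'$. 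By induction, $f \in \mathcal{M}_{2k}$ forces $h_0 = \cdots = h_k = 0$, which then makes $\partial_1^{2k+1} f|_\Delta$ vanish automatically; hence $\mathcal{M}_{2k} = \mathcal{M}_{2k+1}$ and $\mathcal{S}_{2k+1} = \{0\}$, while the symmetric polynomial $(z_1+z_2)^a(z_1-z_2)^{2k}$ supplies a non-zero element of $\mathcal{S}_{2k}$. Combined with the classification step, this gives $\pi \cong \oplus_{k \geq 0} D^+_{\lambda + 4k}$. The antisymmetric case is analogous with $f = \sum_{j \geq 0} h_j(s) d^{2j+1}$, yielding $\mathcal{S}_{2k} = \{0\}$ and $\mathcal{S}_{2k+1} \neq \{0\}$; reindexing $\lambda := \lambda_1 - 2$ produces $\pi \cong \oplus_{k \geq 0} D^+_{\lambda + 4k + 2}$. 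Positivity $\lambda > 0$ is obtained via the intertwining $f \mapsto f/(z_1 - z_2)$ which identifies $\pi$ with a multiplier representation $\tilde{\pi}$ of \mbox{M\"ob} on the symmetric RKHS $\tilde{\mathcal{H}} := \{g : (z_1 - z_2)g \in \mathcal{H}\}$ (containing all symmetric polynomials) with shifted cocycle $\tilde{c}(\varphi, (z_1, z_2)) = c(\varphi, (z_1, z_2))\sqrt{\varphi'(z_1)\varphi'(z_2)}$; applying part (ii)-symmetric to $\tilde{\mathcal{H}}$ together with positive-definiteness of the reduced kernel on $\D^2$ delivers the required bound on $\lambda_1$.

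The principal technical difficulty is the classification step: showing that every irreducible multiplier representation of \mbox{M\"ob} on an RKHS of holomorphic functions on $\D$ is equivalent to some $D^+_\mu$, which is essentially the statement that holomorphic projective cocycles of \mbox{M\"ob} on $\D$ modulo coboundary are parametrized by a single real number. A secondary subtlety is the positivity assertion in the antisymmetric case of (ii), where the reduction via the factor $(z_1 - z_2)$ is needed to leverage the already-established symmetric result.
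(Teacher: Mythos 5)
Your treatment of part (i) and of the symmetric half of part (ii) follows essentially the same route as the paper. For (i), the paper arrives at the same normal form $K_m(z,w)=\mathrm{const}\cdot F(z)B^{(\lambda+2m)}(z,w)\overline{F(w)}$ by computing the curvature $\partial\bar\partial\log K_0$ via \cite[Proposition 2.1]{HHHVB1} rather than by invoking the classification of holomorphic cocycles of M\"ob on $\D$ up to coboundary; the two are interchangeable, though your phrasing should keep the coboundary factor $F(z)\overline{F(w)}$ explicit (the identity $|c_0(\varphi_z,z)|^2=(1-|z|^2)^{-\lambda}$ holds only up to $|F(z)|^2/|F(0)|^2$), and the conclusion $\lambda>0$ really rests on $\mathcal{A}_0$ containing all one-variable polynomials (hence being infinite dimensional), not on non-triviality of $\pi$ alone. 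For the parity analysis in (ii), the paper factors a putative nonzero $f\in\mathcal{S}_k$, $k$ odd, as $(z_1-z_2)^k g$ with $g$ anti-symmetric, whereas you diagonalize in the coordinates $s=z_1+z_2$, $d=z_1-z_2$; both arguments are correct and equivalent.

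The one genuine gap is the positivity assertion in the anti-symmetric case, and it is not repairable. Your reduction $f\mapsto f/(z_1-z_2)$ replaces the diagonal cocycle $c_0$ by $c_0\cdot\varphi'$, so applying the symmetric case to $\tilde{\mathcal{H}}$ gives $\pi\cong\tilde\pi\cong\oplus_{m\ge 0}D^+_{\mu+4m}$ with $\mu>0$, where $\mu=\lambda+2$ is the parameter of the lowest piece $\pi_1$. This yields only $\lambda>-2$; positive-definiteness of the reduced kernel on $\D^2$ supplies no further constraint, because that is precisely the input already consumed in proving $\mu>0$. Indeed $\lambda>0$ fails in general: take $\tilde{\mathcal{H}}=\mathbb{A}^{(1/2)}_{\mathrm{sym}}(\D^2)$ and $\mathcal{H}=(z_1-z_2)\tilde{\mathcal{H}}$ with kernel $(z_1-z_2)\overline{(w_1-w_2)}\,\tilde{K}(\bz,\bw)$. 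This is a reproducing kernel Hilbert space of anti-symmetric holomorphic functions containing all anti-symmetric polynomials, it carries a multiplier representation with non-vanishing cocycle $\tilde c(\varphi,\bz)\sqrt{\varphi'(z_1)\varphi'(z_2)}^{-1}$, and by Corollary \ref{cor1}(ii) it is equivalent to $\oplus_{m\ge 0}D^+_{1+4m}$, i.e.\ $\lambda=-1$. So the correct conclusion in the anti-symmetric case is $\lambda>-2$ (equivalently $\pi\cong\oplus_{m\ge0}D^+_{\mu+4m}$ with $\mu>0$). The paper's own proof of this case consists of the single word ``similarly'' and has the same defect, since there $\mathcal{S}_0=\{0\}$ and the parameter can only be read off from $K_1$; you should flag this rather than claim the stated bound.
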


\begin{proof}
(i) For every $m \geq 0$, let $\mathcal{M}_m$ be the subspace defined in Equation \eqref{eq:14} and $\mathcal{S}_m$ be the subspace defined in the line below the equation \eqref{eq:14}. Also, suppose $\Gamma_m$ is the map defined in Equation \eqref{eq:5} and $\mathcal{A}_m :=\mbox{Im}\,\,\Gamma_m$.

From Proposition \ref{prop2}, it follows that $\mathcal{A}_m$ is a reproducing kernel Hilbert space. Let $K_m$ be the reproducing kernel of $\mathcal{A}_m$. Now, Proposition \ref{thm1} and Lemma \ref{lem2} together yield that the representation $\pi_m : \mbox{M\"ob} \to \mathcal{U}(\mathcal{A}_m)$, defined by $\pi_m(\varphi) = \Gamma_m \left(\pi(\varphi)|_{\mathcal{S}_m}\right) \Gamma_m^*,$ $\varphi \in$ M\"ob, is an irreducible multiplier representation with the associated cocycle $c_m : \mbox{M\"ob} \times \D \to \C$, defined by, $c_m(\varphi, z) = c(\varphi, (z,z))\left(\varphi'(z)\right)^m,\,\,\varphi \in \mbox{M\"ob},\,\,z \in \D$. Note that $c_0 (\varphi, z) = c(\varphi, (z,z))$, $\varphi \in$ M\"ob, $z \in \D$.  

Since $\pi_0 : \mbox{M\"ob} \to \mathcal{U}(\mathcal{A}_0)$ is a multiplier representation and $\mathcal{A}_0$ is a reproducing kernel Hilbert space, the reproducing kernel $K_0$ of $\mathcal{A}_0$ satisfy Equation \eqref{eq:7}. Now, an application of \cite[Proposition 2.1]{HHHVB1} yields that 
\begin{equation}\label{neweq3}
 \partial \bar{\partial} \log K_0(z,z) = \frac{\lambda}{(1 - |z^2|)^2},\,\,z \in \mathbb D,   
\end{equation}
for some $\lambda > 0$. On the other hand, a straightforward computation shows that 
\begin{equation}\label{neweq4}
 \partial \bar{\partial} \log B^{(\lambda)}(z,z) = \frac{\lambda}{(1 - |z^2|)^2},\,\,z \in \mathbb D,   
\end{equation}
where $B^{(\lambda)} (z, w) = \frac{1}{(1 - z \bar w)^\lambda}$, $z, w \in \mathbb D$, is the reproducing kernel of $\mathbb A^{(\lambda)}(\mathbb D)$. Equations \eqref{neweq3} and \eqref{neweq4} together imply the existence of a non-vanishing holomorphic function $F : \mathbb D \to \mathbb C$ such that 
\begin{equation}\label{neweq5}
   K_0(z, w) = F(z) B^{(\lambda)}(z, w) \overline{F(w)},\,\,z, w \in \mathbb D. 
\end{equation}
For any $m \geq 1$, substituting the expression of the cocyle $c_m$ in Equation \eqref{eq:15}, we obtain 
\begin{equation}\label{eq:21}
K_{m}(z,w) = \frac{K_{m}(0,0)}{K_0(0,0)} F(z) B^{(\lambda + 2m)}(z, w)\overline{F(w)},\,\,z, w \in \mathbb{D}.
\end{equation}

Note that $\mathcal{H}$ contains all the polynomials over $\mathbb D^2$, and therefore, it follows form Remark \ref{rem1}(ii) and Lemma \ref{lem4} that $K_m(0,0) > 0$ for every $m \geq 0$. The equation \eqref{eq:21} yields that the map $U_m : \mathcal{A}_m \to \mathbb A^{(\lambda + 2m)}(\mathbb D)$, defined by 
$$U_m K_m(\cdot, w) = \left(\frac{K_m(0,0)}{K_0(0,0)}\right)^\frac{1}{2} B^{(\lambda + 2m)}(\cdot, w) \overline{F(w)},\,\,w \in \mathbb D$$
is a unitary operator. A direct computation verifies that the unitary operator $U_m$ intertwines the representations $\pi_m$ and the holomorphic discrete series representation $D_{\l+2m}^+$. As a result, the representation $\pi_m$ is equivalent to $D_{\l+2m}^+$ and consequently, $\pi \cong \oplus_{m = 0}^{\infty} D^+_{\l + 2m}.$

(ii) Suppose $\mathcal{H}$ is a Hilbert space consisting of symmetric holomorphic functions. We claim that $\mathcal{S}_k = \{0\}$ if $k$ is odd. 

For an odd natural number $k$, let $f \in \mathcal{S}_k$ be a non-zero element. Then $f \in \mathcal{M}_{k-1}$, but $f \notin \mathcal{M}_k$. This implies that $\partial_1^i f|_{\Delta} = 0$ for each $0 \leq i \leq k-1$ and $\partial_1^k f|_{\Delta} \neq 0$. It follows that there exists a holomorphic function $g : \D^2 \to \C$ such that 
$$f(z_1, z_2) = (z_1 - z_2)^k g(z_1, z_2),\,\,z_1, z_2 \in \D.$$
Since $k$ is an odd natural number and $f$ is a symmetric function on $\D^2$, the function $g$ must be anti-symmetric and therefore $g|_{\Delta} = 0$. This implies that $\partial_1^k f|_{\Delta} = 0$, which is a contradiction. Thus, we must have $\mathcal{S}_k = \{0\}$ if $k$ is an odd natural number.

We have already proved that there exists $\lambda > 0$ such that $\pi_k \cong D_{\lambda + 2k}^+$ for every even number $k$.
Therefore, $\pi \cong \displaystyle \oplus_{m = 0}^\infty D_{\lambda + 4m}^+$.

Similarly, if $\mathcal{H}$ is a reproducing kernel Hilbert space consisting of anti-symmetric holomorphic functions over $\D^2$ and contains all the anti-symmetric polynomials over $\D^2$, then there exists a $\lambda > 0$ such that $\pi \cong \displaystyle \oplus_{m = 0}^\infty D_{\lambda + 4m + 2}^+$.  
\end{proof}

For any $\l_1, \l_2 > 0$, $D_{\l_1}^+ \otimes D_{\l_2}^+$ defines an irreducible unitary representation of M\"ob $\times$ M\"ob on the Hilbert space $\mathbb{A}^{(\l_1)}(\D) \otimes \mathbb{A}^{(\l_2)}(\D)$. Although, $D_{\l_1}^+ \otimes D_{\l_2}^+$ is an irreducible representation of M\"ob $\times$ M\"ob, the restriction $D_{\l_1}^+ \otimes D_{\l_2}^+|_{\mbox{M\"ob}}$ is not an irreducible representation, in fact 
\begin{equation}\label{eq:1}
D_{\l_1}^+ \otimes D_{\l_2}^+|_{\mbox{M\"ob}} \cong \oplus_{n = 0}^{\infty} D^+_{\l_1 + \l_2 + 2n}.
\end{equation}
This is known as Clebsh-Gordan formula. An immediate corollary of the Theorem \ref{thm3} gives us the proof of the Clebsh-Gordan formula.
\begin{cor}\label{cor1}
(i) For $\l_1, \l_2 > 0$, let $D_{\l_i}^+$ be holomorphic discrete series representation of M\"ob on the weighted Bergman space $\mathbb{A}^{(\lambda_i)}(\D)$. Then, we have $D_{\l_1}^+ \otimes D_{\l_2}^+|_{\mbox{M\"ob}} \cong \oplus_{n = 0}^{\infty} D^+_{\l_1 + \l_2 + 2n}.$

(ii) For any $\l > 0$, let $\mathbb A_{\mbox{\tiny{sim}}}^{(\l)}(\D^2) = \{f \in \mathbb{A}^{\l}(\D) \otimes \mathbb{A}^{\l}(\D) : f(z_1, z_2) = f(z_2, z_1),\,\,z_1, z_2 \in \D\}$ and $\mathbb A_{\mbox{\tiny{anti}}}^{(\l)}(\D^2) = \{f \in \mathbb{A}^{\l}(\D) \otimes \mathbb{A}^{\l}(\D) : f(z_1, z_2) = -f(z_2, z_1),\,\,z_1, z_2 \in \D\}$. Then, $D_\l^+ \otimes D_\l^+ : \mbox{M\"ob} \to \mathcal{U}\left(\mathbb  A_{\mbox{\tiny{sim}}}^{(\l)}(\D^2)\right)$ (resp. $D_\l^+ \otimes D_\l^+ : \mbox{M\"ob} \to \mathcal{U}\left(\mathbb  A_{\mbox{\tiny{anti}}}^{(\l)}(\D^2)\right)$) is unitarily equivalent to $\oplus_{m = 0}^{\infty} D^+_{2\l + 4m}$ (resp. $\oplus_{m = 0}^{\infty} D^+_{2\l + 4m + 2}$).
\end{cor}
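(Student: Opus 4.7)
My plan is to apply Theorem \ref{thm3} to each of the three Hilbert spaces appearing in the corollary and to identify the unknown parameter produced by that theorem by inspecting the diagonal restriction of the cocycle.

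For (i), the Hilbert space $\mathbb A^{(\l_1)}(\D)\otimes \mathbb A^{(\l_2)}(\D)$ is a reproducing kernel Hilbert space of holomorphic functions on $\D^2$ that contains all polynomials, and $\pi:=D_{\l_1}^+\otimes D_{\l_2}^+|_{\mbox{\tiny{M\"ob}}}$ is a multiplier representation with cocycle $c(\v,(z_1,z_2))=\v'(z_1)^{\l_1/2}\v'(z_2)^{\l_2/2}$. Theorem \ref{thm3}(i) therefore yields $\pi\cong\oplus_{n\ge 0}D^+_{\eta+2n}$ for some $\eta>0$. By Proposition \ref{thm1}, the irreducible summand $\pi_0$ has cocycle $c_0(\v,z)=c(\v,(z,z))=\v'(z)^{(\l_1+\l_2)/2}$, which is precisely the cocycle of $D^+_{\l_1+\l_2}$. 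Since the reproducing kernel of $\mathcal{A}_0$ is determined by this cocycle up to a positive multiplicative constant, the equivalence $\pi_0\cong D^+_\eta$ forces $\eta=\l_1+\l_2$, giving the Clebsh--Gordan formula.

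For (ii), each of $\mathbb A_{\mbox{\tiny{sim}}}^{(\l)}(\D^2)$ and $\mathbb A_{\mbox{\tiny{anti}}}^{(\l)}(\D^2)$ is a reproducing kernel Hilbert space on $\D^2$ containing the (anti-)symmetric polynomials and is invariant under the diagonal action of M\"ob, and the common ambient cocycle restricted to $\Delta$ is $\v'(z)^\l$. In the symmetric case, Theorem \ref{thm3}(ii) supplies $\pi\cong\oplus_m D^+_{\mu+4m}$, and the cocycle identification of $\pi_0$ with the cocycle of $D^+_{2\l}$ yields $\mu=2\l$. In the anti-symmetric case, every anti-symmetric function automatically vanishes on $\Delta$, so $\mathcal{S}_0=\{0\}$; the first non-trivial summand is $\pi_1$, whose cocycle is $c_1(\v,z)=c(\v,(z,z))\v'(z)=\v'(z)^{\l+1}$, matching the cocycle of $D^+_{2\l+2}$. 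This pins the parameter in Theorem \ref{thm3}(ii) for this case to $2\l$, giving $\pi\cong\oplus_m D^+_{2\l+4m+2}$.

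The main obstacle I anticipate is the anti-symmetric case: because $\pi_0$ is trivial, the parameter in Theorem \ref{thm3}(ii) cannot be read off from $\pi_0$ and must instead be recovered from the cocycle of $\pi_1$, with the extra factor $\v'(z)$ carefully accounted for. As an independent cross-check, one can compute the diagonal restrictions of the reproducing kernels directly, obtaining $(1-z\bar w)^{-(\l_1+\l_2)}$ in (i) and $(1-z\bar w)^{-2\l}$ in the symmetric subcase of (ii), both of which agree with Equation \eqref{eq:21} under the identified parameters.
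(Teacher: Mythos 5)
Your proof is correct and follows essentially the route the paper intends: the paper gives no explicit argument, declaring the corollary an immediate consequence of Theorem \ref{thm3}, and your identification of the parameter via the diagonal restriction of the cocycle (reading $\eta$ off $\pi_0$, and off $\pi_1$ in the anti-symmetric case where $\mathcal{S}_0=\{0\}$) is exactly the step that makes ``immediate'' precise.
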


Combining the first part of the corollary above and Theorem \ref{thm3}, we obtain the following corollary.

\begin{cor}\label{thm3x}
Let $\pi : \mbox{M\"ob} \to \mathcal{U}(\mathcal{H})$ be a multiplier representation where $\mathcal{H}$ is a reproducing kernel Hilbert space. If $\mathcal{H}$ consists of holomorphic functions on $\D^2$ and contains all the polynomials, then there exists $\lambda_1, \lambda_2 > 0$ such that $\pi \cong D^+_{\lambda_1} \otimes D^+_{\lambda_2}|_{\mbox{M\"ob}}$. 
\end{cor}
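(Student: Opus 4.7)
The plan is a short two-step combination. By Theorem \ref{thm3}(i), applied to the given multiplier representation $\pi$ of M\"ob on the reproducing kernel Hilbert space $\mathcal{H} \subseteq \mbox{Hol}(\mathbb D^2, \mathbb C)$ containing $\mathbb C[\bz]$, there exists some $\lambda > 0$ such that
\begin{equation*}
\pi \;\cong\; \bigoplus_{m=0}^\infty D^+_{\lambda + 2m}.
\end{equation*}
This already realizes $\pi$ as a direct sum of holomorphic discrete series representations. It now remains to realize the same direct sum as a tensor product restricted to the diagonal M\"ob.

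For this I would invoke the Clebsh-Gordan formula, which is Corollary \ref{cor1}(i): for any $\mu_1, \mu_2 > 0$,
\begin{equation*}
D^+_{\mu_1} \otimes D^+_{\mu_2}\big|_{\mbox{\tiny{M\"ob}}} \;\cong\; \bigoplus_{n=0}^\infty D^+_{\mu_1 + \mu_2 + 2n}.
\end{equation*}
So the task reduces to matching the indices: I would simply pick any two positive numbers $\lambda_1, \lambda_2 > 0$ with $\lambda_1 + \lambda_2 = \lambda$ (for instance $\lambda_1 = \lambda_2 = \lambda/2$). Then the Clebsh-Gordan decomposition of $D^+_{\lambda_1} \otimes D^+_{\lambda_2}|_{\mbox{\tiny{M\"ob}}}$ coincides term-by-term with the decomposition of $\pi$ supplied by Theorem \ref{thm3}(i), and transitivity of $\cong$ gives $\pi \cong D^+_{\lambda_1} \otimes D^+_{\lambda_2}|_{\mbox{\tiny{M\"ob}}}$.

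There is essentially no obstacle here: the corollary is a purely formal consequence of the two preceding results, and the only (minor) point to note is that the pair $(\lambda_1, \lambda_2)$ is not uniquely determined by $\pi$, since only the sum $\lambda_1 + \lambda_2$ is pinned down by the bottom index $\lambda$ of the decomposition. The proof can therefore be written in two or three lines.
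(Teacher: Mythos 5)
Your proof is correct and is exactly the paper's argument: the authors also obtain the corollary by combining Theorem \ref{thm3}(i) with the Clebsh--Gordan formula of Corollary \ref{cor1}(i) and choosing any $\lambda_1,\lambda_2>0$ with $\lambda_1+\lambda_2=\lambda$. Your remark that only the sum $\lambda_1+\lambda_2$ is determined is a fair observation consistent with the paper's phrasing ``there exists $\lambda_1,\lambda_2>0$.''
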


\subsection{The case of arbitrary $n$}

We now describe the representation $\pi$ when $n$ is arbitrary. To accomplish this, let us consider the subset 
$$\Delta_{n} := \{(z_1,z_1,z_2,z_3,\hdots ,z_{n-1}) : z_1,  \ldots, z_{n-1} \in \D\}$$
of $\D^n$.
For every $k_n \geq 0$, suppose
\begin{equation}\label{eq:32}
\mathcal{M}_{k_n} := \left\lbrace f \in \mathcal{H} : \partial_{1}^i f|_{\Delta_{n}} = 0,\,\,0 \leq i \leq k_n \right\rbrace.
\end{equation}
Each $\mathcal{M}_{k_n}$ is a closed subspace and $\mathcal{M}_{k'_{n}} \subseteq \mathcal{M}_{k_n}$ if ${k'_{n}} \geq {k_n}$. Consider the subspaces $\mathcal{S}_0 = \mathcal{H} \ominus \mathcal{M}_0$ and $\mathcal{S}_{k_n} = \mathcal{M}_{k_{n}-1} \ominus \mathcal{M}_{k_n}$ for $k_{n} \geq 1$. Evidently, we have $\mathcal{H} = \oplus_{k_n \geq 0} \mathcal{S}_{k_n}$.

\begin{prop}\label{prop31}
For each $k_{n} \geq 0$, the subspace $\mathcal{S}_{k_n}$ is a reducing subspace of the representation $\pi$.
\end{prop}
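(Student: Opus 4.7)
The plan is to mimic the proof of Proposition \ref{prop1}(i), since the subspace $\mathcal{S}_{k_n}$ is constructed from the filtration $\{\mathcal{M}_{k_n}\}$ in exactly the same way as in the bi-disc case. As in the argument there, since $\pi$ is a projective unitary representation, a closed subspace is invariant under $\pi$ if and only if it is reducing. Hence it suffices to prove that each $\mathcal{M}_{k_n}$ is invariant under $\pi$; the subspace $\mathcal{S}_{k_n} = \mathcal{M}_{k_n-1}\cap \mathcal{M}_{k_n}^{\perp}$ will then automatically be reducing as an intersection of reducing subspaces.

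The key geometric observation is that the diagonal set $\Delta_n$ is preserved by the M\"ob-action on $\D^n$. Indeed, if $\bz = (z_1,z_1,z_2,\ldots ,z_{n-1}) \in \Delta_n$, then $\varphi(\bz) = (\varphi(z_1),\varphi(z_1),\varphi(z_2),\ldots ,\varphi(z_{n-1}))$ again has its first two entries equal, so $\varphi(\bz) \in \Delta_n$. Consequently, if $f \in \mathcal{M}_{k_n}$ and $\bz \in \Delta_n$, then $\left(\partial_1^{k} f\right)(\varphi(\bz)) = 0$ for every $0 \leq k \leq k_n$.

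Next, I would fix $f \in \mathcal{M}_{k_n}$ and $\varphi \in$ M\"ob, and expand, for $0 \leq i \leq k_n$,
\begin{equation*}
\partial_1^i\bigl(\pi(\varphi^{-1})f\bigr)(\bz) = \sum_{j = 0}^i \binom{i}{j}\,\partial_1^{i-j}c(\varphi,\bz)\,\partial_1^j(f \circ \varphi)(\bz)
\end{equation*}
via the Leibniz rule, exactly as in Equation \eqref{eq:4}. Since the M\"ob-action is diagonal, the composition $f \circ \varphi$ depends on the first coordinate only through $\varphi(z_1)$, so the Fa\`a di Bruno formula expresses $\partial_1^j(f \circ \varphi)(\bz)$ as a sum of terms, each containing a factor $\partial_1^{l_1+\cdots+l_j} f(\varphi(\bz))$ with $l_1+\cdots+l_j \leq 1\cdot l_1 + 2 \cdot l_2 + \cdots + j \cdot l_j = j \leq i \leq k_n$. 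By the observation above, each such factor vanishes whenever $\bz \in \Delta_n$, and therefore $\partial_1^j(f \circ \varphi)|_{\Delta_n} = 0$ for every $0 \leq j \leq k_n$.

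Combining these two facts yields $\partial_1^i\bigl(\pi(\varphi^{-1})f\bigr)|_{\Delta_n} = 0$ for $0 \leq i \leq k_n$, which shows that $\pi(\varphi^{-1})f \in \mathcal{M}_{k_n}$, establishing invariance of $\mathcal{M}_{k_n}$ under $\pi$. The conclusion about $\mathcal{S}_{k_n}$ then follows formally. I do not anticipate a significant obstacle here: the entire argument is a clean generalization of the $n = 2$ case, and the only new point to verify is the invariance of $\Delta_n$ under the diagonal action, which is immediate from the definition.
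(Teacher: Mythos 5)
Your proof is correct and follows exactly the route the paper intends: the paper's own proof of Proposition \ref{prop31} simply says it is similar to that of Proposition \ref{prop1}, and you have correctly spelled out that adaptation, with the only genuinely new point being the invariance of $\Delta_n$ under the diagonal action, which you verify. No gaps.
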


\begin{proof}
 The proof is similar to the proof of Proposition \ref{prop1}.   
\end{proof}

The set $\left\lbrace \overbar{\partial_1}^i K(\cdot, \bw) : 0 \leq i \leq k_{n},\,\,\bw \in \Delta_{n}\right\rbrace$ is dense in $\mathcal{M}_{k_n}^{\perp}$ for each $k_{n} \geq 0$. Let $P_{k_n}$ denote the orthogonal projection onto $\mathcal{S}_{k_n}$. It follows from a similar argument as it is given in the proof of the Lemma \ref{lem1} that the set $\left\lbrace P_{k_n}\overbar{\partial_1}^{k_n} K(\cdot, \bw) : \bw \in \Delta_{n}\right\rbrace$ is dense in $\mathcal{S}_{k_n}$ for every $k_{n} \geq 0$. As before, consider the map $\Gamma_{k_n} : \mathcal{S}_{k_n} \to \mbox{Hol}(\D^{n-1}, \C)$, defined by, 
\begin{equation}\label{eq:34}
\Gamma_{k_n} f = \partial_1^{k_n}f|_{\Delta_n},\,\,f \in \mathcal{S}_{k_n}.
\end{equation}
Evidently, the map $\Gamma_{k_n}$ is an injective linear map. Let $\mathcal{A}_{k_n} := \mbox{Im}\,\, \Gamma_{k_n}$. Define an inner product on $\mathcal{A}_{k_n}$ such that $\Gamma_{k_n} : \mathcal{S}_{k_n} \to \mathcal{A}_{k_n}$ is unitary. The proof of the following proposition is similar to that of Proposition \ref{prop2}.
\begin{prop}\label{prop34}
For each $k_n \geq 0$, the Hilbert space $\mathcal{A}_{k_n}$ is a reproducing kernel Hilbert space with the reproducing kernel $K_{k_n} : \D^{n-1} \times \D^{n-1} \to \C$, defined by,
\begin{align*}
&K_{k_n}((z_1,z_2,\hdots,z_{n-1}), (w_1,w_2,\hdots,w_{n-1}))\\
&\phantom{gada} = \langle \Gamma_{k_n}P_{k_n} \overbar{\partial_1}^{k_n}K(\cdot, (w_1,w_1,w_2,\hdots,w_{n-1})), \Gamma_{k_n}P_{k_n} \overbar{\partial_1}^{k_n}K(\cdot, (z_1,z_1,z_2,\hdots,z_{n-1}))\rangle,
\end{align*}
for $z_{i}, w_{i} \in \D$, $1 \leq i \leq n-1$.
\end{prop}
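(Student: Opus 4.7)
The plan is to mirror the proof of Proposition \ref{prop2} line by line, with the diagonal set $\Delta$ replaced by $\Delta_n$ and the single-variable test points $(w,w)$ replaced by $(w_1,w_1,w_2,\ldots,w_{n-1})$. First I would fix a candidate reproducing kernel by setting
\[
K_{k_n}(\cdot,(w_1,\ldots,w_{n-1})) := \Gamma_{k_n} P_{k_n}\overbar{\partial_1}^{k_n} K(\cdot,(w_1,w_1,w_2,\ldots,w_{n-1})),
\]
which is an element of $\mathcal{A}_{k_n}$ by construction, and whose value at $(z_1,\ldots,z_{n-1})$ is the expression displayed in the statement, provided we show that it reproduces.

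Next, for $g \in \mathcal{A}_{k_n}$, write $g = \Gamma_{k_n} f$ with $f \in \mathcal{S}_{k_n}$, and compute
\[
\bigl\langle g, K_{k_n}(\cdot,(w_1,\ldots,w_{n-1}))\bigr\rangle
= \bigl\langle \Gamma_{k_n} f, \Gamma_{k_n} P_{k_n}\overbar{\partial_1}^{k_n} K(\cdot,(w_1,w_1,w_2,\ldots,w_{n-1}))\bigr\rangle.
\]
Using that $\Gamma_{k_n}$ is unitary onto $\mathcal{A}_{k_n}$, that $f \in \mathcal{S}_{k_n}$ so $P_{k_n} f = f$ and hence $P_{k_n}$ may be dropped from the right-hand side, and finally the reproducing property of $K$ on $\mathcal{H}$, the expression collapses to $\partial_1^{k_n} f(w_1,w_1,w_2,\ldots,w_{n-1}) = (\Gamma_{k_n} f)(w_1,w_2,\ldots,w_{n-1}) = g(w_1,\ldots,w_{n-1})$. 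This verifies the reproducing identity.

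Finally I would invoke the poly-disc analog of Lemma \ref{lem1} (which the authors assert just before the statement of the proposition, using the same argument: orthogonality of $f \in \mathcal{S}_{k_n}$ to every $P_{k_n}\overbar{\partial_1}^{k_n}K(\cdot,\bw)$ with $\bw \in \Delta_n$ forces $\partial_1^{k_n} f|_{\Delta_n}=0$, whence $f \in \mathcal{M}_{k_n}$ and $f=0$) to conclude that $\{K_{k_n}(\cdot,(w_1,\ldots,w_{n-1})) : (w_1,\ldots,w_{n-1}) \in \D^{n-1}\}$ spans a dense subspace of $\mathcal{A}_{k_n}$; together with the reproducing identity this confirms that $\mathcal{A}_{k_n}$ is a reproducing kernel Hilbert space and that $K_{k_n}$ is its reproducing kernel.

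I do not expect any serious obstacle here: every step is a direct transcription of the $n=2$ argument, the only bookkeeping being that $(w,w)$ becomes $(w_1,w_1,w_2,\ldots,w_{n-1}) \in \Delta_n$ and $\mbox{Hol}(\D,\C)$ becomes $\mbox{Hol}(\D^{n-1},\C)$. The mild point worth being careful about is the identification of $\Gamma_{k_n} f$ as a function on $\D^{n-1}$ via the parametrization $(w_1,\ldots,w_{n-1}) \mapsto (w_1,w_1,w_2,\ldots,w_{n-1})$ of $\Delta_n$, but this is the content of the definition \eqref{eq:34} and requires no additional work.
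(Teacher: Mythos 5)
Your proposal is correct and is exactly what the paper intends: the paper gives no separate argument for Proposition \ref{prop34}, simply declaring its proof ``similar to that of Proposition \ref{prop2},'' and your line-by-line transcription (with $\Delta$ replaced by $\Delta_n$, $(w,w)$ by $(w_1,w_1,w_2,\ldots,w_{n-1})$, and density supplied by the already-asserted poly-disc analog of Lemma \ref{lem1}) is precisely that adaptation.
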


Let $\pi_{k_n} : \mbox{M\"ob} \to \mathcal{U}(\mathcal{A}_{k_n})$ be the representation, defined by, 
$$\pi_{k_n}(\varphi) := \Gamma_{k_n} \left(\pi(\varphi)|_{\mathcal{S}_{k_n}}\right) \Gamma_{k_n}^*,\,\,\varphi \in \mbox{M\"ob}.$$
Note that each $\pi_{k_n}$ is a unitary representation of M\"ob on the reproducing kernel Hilbert space $\mathcal{A}_{k_n}$. 

\begin{prop}\label{prop35}
For each $k_{n} \geq 0$, the representation $\pi_{k_n}$ is a multiplier representation on the reproducing kernel Hilbert space $\mathcal{A}_{k_n}$ with the associated cocycle $c_{k_n} : \mbox{M\"ob} \times \D^{n-1} \to \C$, defined by, $c_{k_n}(\varphi,(z_1,z_2,\hdots,z_{n-1}) ) = c(\varphi, (z_1,z_1,z_2,\hdots,z_{n-1}))\left(\varphi'(z_1)\right)^{k_n},$ for $\varphi \in \mbox{M\"ob}$ and $(z_{1}, \ldots, z_{n-1}) \in \D^{n-1}$.
\end{prop}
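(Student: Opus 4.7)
The plan is to run the argument of Proposition \ref{thm1} in a fully analogous manner, using $\Delta_n$ in place of $\Delta$ and treating the extra variables $z_2, \ldots, z_{n-1}$ as inert parameters throughout the calculation. Given $g \in \mathcal{A}_{k_n}$, there is a unique $f \in \mathcal{S}_{k_n}$ with $g = \Gamma_{k_n} f = \partial_1^{k_n} f|_{\Delta_n}$, and the unitarity of $\Gamma_{k_n}$ gives $\pi_{k_n}(\varphi^{-1}) g = \Gamma_{k_n} \pi(\varphi^{-1}) f$. Since $\pi$ is a multiplier representation defined by \eqref{eq:2}, this equals $\partial_1^{k_n}\bigl(c(\varphi,\cdot)(f\circ\varphi)\bigr)\big|_{\Delta_n}$.

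The next step is to expand $\partial_1^{k_n}\bigl(c(\varphi,\cdot)(f\circ\varphi)\bigr)$ by the Leibniz rule into a sum of $k_n + 1$ terms and show that, after restriction to $\Delta_n$, only the summand with all $k_n$ derivatives landing on $f \circ \varphi$ survives. For $0 \leq i \leq k_n - 1$, an application of Fa\`a di Bruno's formula to $\partial_1^i(f\circ\varphi)$ expresses it as a sum of terms, each of which contains a factor $(\partial_1^{l_1 + \cdots + l_i} f)(\varphi(\bz))$ with $1 \leq l_1 + \cdots + l_i \leq i \leq k_n - 1$. Since $\varphi$ acts diagonally on $\D^n$ we have $\varphi(\Delta_n) = \Delta_n$, and since $f \in \mathcal{S}_{k_n} \subseteq \mathcal{M}_{k_n - 1}$, every such factor vanishes identically on $\Delta_n$.

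For the surviving term $c(\varphi,\cdot)\,\partial_1^{k_n}(f\circ\varphi)\big|_{\Delta_n}$, the same Fa\`a di Bruno analysis shows that any multi-index $(l_1, \ldots, l_{k_n})$ contributing a non-zero value on $\Delta_n$ must satisfy $l_1 + \cdots + l_{k_n} \geq k_n$. Combined with the defining constraint $l_1 + 2 l_2 + \cdots + k_n l_{k_n} = k_n$, this forces $l_1 = k_n$ and $l_2 = \cdots = l_{k_n} = 0$. Hence $\partial_1^{k_n}(f \circ \varphi)(\bz) = (\partial_1^{k_n} f)(\varphi(\bz))\, (\varphi'(z_1))^{k_n}$ for $\bz \in \Delta_n$. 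Substituting $\bz = (z_1, z_1, z_2, \ldots, z_{n-1})$ and invoking the identity $g(w_1, \ldots, w_{n-1}) = (\partial_1^{k_n} f)(w_1, w_1, w_2, \ldots, w_{n-1})$, one reads off the desired formula for $\pi_{k_n}(\varphi^{-1}) g$.

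The main obstacle, if one can call it that, is purely notational: one must keep straight that $\Gamma_{k_n}$ collapses the first two coordinates of $\D^n$ while the diagonal M\"ob-action preserves this identification, so that the action descends cleanly to the remaining $n - 1$ coordinates. Once the Leibniz and Fa\`a di Bruno bookkeeping of Proposition \ref{thm1} is transposed to $\Delta_n \subseteq \D^n$, the conclusion follows: $\pi_{k_n}$ is a multiplier representation on $\mathcal{A}_{k_n}$ with cocycle $c_{k_n}(\varphi, (z_1, \ldots, z_{n-1})) = c(\varphi, (z_1, z_1, z_2, \ldots, z_{n-1}))(\varphi'(z_1))^{k_n}$.
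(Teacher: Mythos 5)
Your proposal is correct and follows essentially the same route as the paper, which simply states that the proof is analogous to that of Proposition \ref{thm1}; you have carried out the Leibniz and Fa\`a di Bruno bookkeeping on $\Delta_n$ explicitly, which is precisely the intended argument. Note that, appropriately, you do not claim irreducibility here (unlike in Proposition \ref{thm1}), since the diagonal M\"ob-action on $\D^{n-1}$ is not transitive for $n\geq 3$ and Lemma \ref{lem2} would not apply.
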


\begin{proof}
The proof is similar to the proof of Proposition \ref{thm1}.  
\end{proof}

Using the mathematical induction method, we now prove the main theorem of this section.

\begin{thm}\label{main thm}
Let $\pi : \mbox{M\"ob} \to \mathcal{U}(\mathcal{H})$ be a multiplier representation where $\mathcal{H}$ is a reproducing kernel Hilbert space. If $\mathcal{H}$ consists of holomorphic functions on $\D^n$ and contains all the polynomials, then there exists $\l_1, \l_2, \hdots, \l_n > 0$ such that  $\pi \cong   (\otimes_{i=1}^{n} D_{\l_i}^+)|_{\mbox{M\"ob}}$.   
\end{thm}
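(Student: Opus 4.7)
The plan is induction on $n$, with the base case $n = 2$ supplied by Corollary \ref{thm3x}. For the induction step $n-1 \Rightarrow n$, I would use the orthogonal decomposition $\mathcal{H} = \bigoplus_{k_n \geq 0} \mathcal{S}_{k_n}$ of Proposition \ref{prop31}, transfer to the reproducing kernel Hilbert spaces $\mathcal{A}_{k_n} \subseteq \mbox{Hol}(\D^{n-1}, \C)$ via the isometries $\Gamma_{k_n}$, and work with the multiplier representations $\pi_{k_n}$ on $\mathcal{A}_{k_n}$, whose cocycles satisfy $c_{k_n}(\varphi, \bz) = c_0(\varphi, \bz) (\varphi'(z_1))^{k_n}$ by Proposition \ref{prop35}.

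To apply the induction hypothesis to $\pi_0$, first verify that every polynomial on $\D^{n-1}$ lies in $\mathcal{A}_0$: for $p \in \C[z_1, \ldots, z_{n-1}]$, the polynomial $g(w_1, \ldots, w_n) = p(w_1, w_3, \ldots, w_n)$ lies in $\mathcal{H}$ and satisfies $g|_{\Delta_n} = p$, so decomposing $g = g_1 + g_2$ with $g_1 \in \mathcal{S}_0$ and $g_2 \in \mathcal{M}_0$ yields $\Gamma_0 g_1 = p$. The crucial move is to apply the induction hypothesis \emph{only} to $\pi_0$, obtaining $\nu_1, \ldots, \nu_{n-1} > 0$ with $\pi_0 \cong (\otimes_{i=1}^{n-1} D^+_{\nu_i})|_{\mbox{M\"ob}}$. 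This representation equivalence translates into the cocycle identity
$$c_0(\varphi, \bz) = F(\bz)\prod_{i=1}^{n-1}(\varphi'(z_i))^{\nu_i/2} F(\varphi\cdot \bz)^{-1},$$
where $F : \D^{n-1} \to \C$ is a non-vanishing holomorphic function encoding the intertwining unitary (multiplication by $F$ sends the reproducing kernel of $\mathcal{A}_0$ to the standard kernel $\prod_i B^{(\nu_i)}$).

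Multiplying the preceding identity by $(\varphi'(z_1))^{k_n}$ and combining with the recursion for $c_{k_n}$, the same $F$ realizes $c_{k_n}$ as cohomologous to $(\varphi'(z_1))^{\nu_1/2 + k_n} \prod_{i \geq 2}(\varphi'(z_i))^{\nu_i/2}$, which is the standard cocycle of $(D^+_{\nu_1 + 2k_n} \otimes D^+_{\nu_2} \otimes \cdots \otimes D^+_{\nu_{n-1}})|_{\mbox{M\"ob}}$. Hence $\pi_{k_n}$ is unitarily equivalent to this tensor product representation for every $k_n \geq 0$.

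Summing and factoring,
$$\pi \cong \bigoplus_{k_n \geq 0} \pi_{k_n} \cong \left(\bigoplus_{k_n \geq 0} D^+_{\nu_1 + 2k_n}\right) \otimes D^+_{\nu_2} \otimes \cdots \otimes D^+_{\nu_{n-1}}\Big|_{\mbox{M\"ob}},$$
and the Clebsh-Gordan formula of Corollary \ref{cor1}(i) rewrites the first factor as $(D^+_{\lambda_1} \otimes D^+_{\lambda_2})|_{\mbox{M\"ob}}$ for any $\lambda_1, \lambda_2 > 0$ with $\lambda_1 + \lambda_2 = \nu_1$. Setting $\lambda_j := \nu_{j-1}$ for $3 \leq j \leq n$ then yields $\pi \cong (\otimes_{i=1}^n D^+_{\lambda_i})|_{\mbox{M\"ob}}$, completing the induction. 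The main subtlety is resisting the temptation to apply the induction hypothesis independently to each $\pi_{k_n}$: that would furnish parameters $\mu_i(k_n)$ with no a priori relation across $k_n$, blocking the factoring step; the parameters must instead be pinned down uniformly by exploiting the multiplicative relation between $c_{k_n}$ and $c_0$.
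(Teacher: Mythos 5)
Your overall architecture (induct on $n$, peel off one variable via the diagonal $\Delta_n$, and insist that the parameters be pinned down uniformly across the blocks $\mathcal{S}_{k_n}$) matches the paper's, and your closing remark about the danger of applying the induction hypothesis independently to each $\pi_{k_n}$ identifies exactly the right difficulty. But your fix --- apply the induction hypothesis only to $\pi_0$ and propagate through the cocycle relation --- has a genuine gap at the step ``this representation equivalence translates into the cocycle identity $c_0(\varphi,\bz)=F(\bz)\prod_i(\varphi'(z_i))^{\nu_i/2}F(\varphi\cdot\bz)^{-1}$.'' The induction hypothesis, as stated in Theorem \ref{main thm}, yields only an abstract equivalence of projective representations, i.e.\ a unitary $U$ and a phase function $f:\mbox{M\"ob}\to\mathbb T$; it does not say that $U$ is a multiplication operator, and it cannot: by the Clebsh--Gordan formula $D^+_1\otimes D^+_3|_{\mbox{M\"ob}}\cong D^+_2\otimes D^+_2|_{\mbox{M\"ob}}$, yet the cocycles $\varphi'(z_1)^{1/2}\varphi'(z_2)^{3/2}$ and $\varphi'(z_1)\varphi'(z_2)$ are not cohomologous via a non-vanishing holomorphic function on $\D^2$ (that would force the two kernels to differ by $F(\cdot)\overline{F(\cdot)}$, hence the associated multiplication pairs to be unitarily equivalent, which they are not). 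So the tuple $(\nu_1,\dots,\nu_{n-1})$ is not even determined by the equivalence class of $\pi_0$ --- only the sum $\sum_i\nu_i$ is --- and the cocycle identity you need cannot be extracted from the inductive conclusion in the weak form you invoke.

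A second, related problem: even granting a cocycle identity for $c_{k_n}$, the diagonal action of M\"ob on $\D^{n-1}$ is not transitive for $n-1\ge 2$, so the transformation rule \eqref{eq:7} no longer determines the reproducing kernel of $\mathcal{A}_{k_n}$ from its cocycle (unlike the one-variable situation exploited in Theorem \ref{thm3}); hence ``cocycles cohomologous'' does not by itself give $\pi_{k_n}\cong(D^+_{\nu_1+2k_n}\otimes D^+_{\nu_2}\otimes\cdots)|_{\mbox{M\"ob}}$. The paper sidesteps both issues by strengthening the inductive statement: it carries along the explicit form of the cocycle (hypothesis (iii) of its proof, $c_{k_n,\dots,k_2}(\varphi,z)=c(\varphi,(z,\dots,z))(\varphi'(z))^{k_n+\cdots+k_2}$) and iterates the diagonal restriction all the way down to representations on $\mbox{Hol}(\D,\C)$, where transitivity of M\"ob on $\D$ together with the curvature argument of Theorem \ref{thm3}(i) produces a single $\lambda$, determined by $c(\varphi,(z,\dots,z))$ alone and therefore common to every multi-index; the Clebsh--Gordan formula is then applied only at the very end, as in your last display. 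If you reformulate your induction so that the inductive hypothesis records this cocycle data rather than just the isomorphism type, your argument can be repaired along these lines.
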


\begin{proof}
For every $k_n \geq 0$, let $\mathcal{M}_{k_n}$ be the subspace defined in Equation \eqref{eq:32} and $\mathcal{S}_{k_n}$ be the subspace defined in the line below Equation \eqref{eq:32}. Also, suppose $\Gamma_{k_n}$ is the map defined in Equation \eqref{eq:34} and $\mathcal{A}_{k_n} : =\mbox{Im}\,\,\Gamma_{k_n}$.

From Proposition \ref{prop34}, it follows that $\mathcal{A}_{k_n}$ is a reproducing kernel Hilbert space. Also, Proposition \ref{prop35} yields that the representation $\pi_{k_n} : \mbox{M\"ob} \to \mathcal{U}(\mathcal{A}_{k_n})$, defined by $\pi_{k_n}(\varphi) = \Gamma_{k_n} \left(\pi(\varphi)|_{\mathcal{S}_{k_n}}\right) \Gamma_{k_n}^*,$ $\varphi \in$ M\"ob, is a multiplier representation with the associated cocycle $c_{k_n} : \mbox{M\"ob} \times \D^{n-1} \to \C$, defined by, 
$$c_{k_n}(\varphi,(z_1,z_2,\hdots,z_{n-1}) ) = c(\varphi, (z_1,z_1,z_2,\hdots,z_{n-1}))\left(\varphi'(z_1)\right)^{k_n},$$
for $\varphi \in \mbox{M\"ob}$ and $(z_{1}, \ldots, z_{n-1}) \in \D^{n-1}$.

The map $\Gamma_{k_n}$ has a natural extension $\tilde{\Gamma}_{k_n} : \mathcal{H} \to \mbox{Hol}(\D^{n-1}, \C)$, given by the formula
$$\tilde{\Gamma}_{k_n} f = \partial_1^{k_n}f|_{\Delta_n},\,\,f \in \mathcal{H}.$$
Due to the assumption $\mathcal{H}$ contains all the polynomials, and therefore, $\mbox{Im}\,\,\tilde{\Gamma}_{k_n}$ also contains all the polynomials on $\mathbb D^{n-1}$. Since $\mathcal{A}_{k_n} = \mbox{Im}\,\,\tilde{\Gamma}_{k_n}$, it follows that the Hilbert space $\mathcal{A}_{k_n}$ contains all the polynomials on $\mathbb D^{n-1}$.

Thus, the representation $\pi_{k_n} : \mbox{M\"ob} \to \mathcal{U}(\mathcal{A}_{k_n})$ is a multiplier representation on a reproducing kernel Hilbert space $\mathcal{A}_{k_n}$ consisting of holomorphic function of $\mathbb D^{n-1}$. Also, $\mathcal{A}_{k_n}$ contains all the polynomials on $\mathbb D^{n-1}$. To apply the mathematical induction, we assume the followings
\begin{itemize}
\item[(i)] $\mathcal{A}_{k_n} = \displaystyle \oplus_{\substack{k_i \geq 0,\\ 2\leq i\leq n-1}} \mathcal{S}_{k_n, k_{n-1}, \ldots, k_2}$,
\item[(ii)] Each $\mathcal{S}_{k_n, k_{n-1}, \ldots, k_2}$ is a reducing subspace of $\pi_{k_n}$,
\item[(iii)] ${\pi_{k_n}}|_{\mathcal{S}_{k_n, \ldots, k_2}} \cong \pi_{k_{n}, \ldots, k_2}$ where $\pi_{k_n, \ldots, k_2}$ is a multiplier representation on a reproducing kernel Hilbert space $\mathcal{A}_{k_n, \ldots, k_2} \subseteq \mbox{Hol}(\mathbb D, \mathbb C)$ with the associated cocycle $c_{k_n, \ldots, k_2} : \mbox{M\"ob} \times \mathbb D \to \mathbb C$, defined by
$$c_{k_n, \ldots, k_2}(\varphi, z) = c(\varphi, (z, \ldots, z)) \left(\varphi'(z)\right)^{k_n + \cdots + k_2},$$
$\varphi \in$ M\"ob, $z \in \D$.
\end{itemize}
Now, following a similar argument as it is given in the proof of the Theorem \ref{thm3}(i), we obtain a $\lambda > 0$ such that
$$\pi_{k_{n}, \ldots, k_2} \cong D^+_{\lambda + 2k_n + \cdots + 2k_{2}},$$
for every $k_i \geq 0$, $2 \leq i \leq n$ and therefore,
$$\pi \cong \oplus_{\substack{k_i \geq 0,\\ 2\leq i\leq n}}  D^+_{\lambda + 2k_n + \cdots + 2k_{2}}.$$
On the other hand, by repeated application of the Clebsh-Gordan formula, we have  
$$\oplus_{\substack{k_i \geq 0,\\ 2\leq i\leq n}}  D^+_{\lambda + 2k_n + \cdots + 2k_{2}} \cong D^+_{\l_1} \otimes \cdots \otimes D^+_{\l_n}|_{\mbox{\tiny{M\"ob}}},$$
for any $\lambda_1, \ldots, \l_n > 0$ such that $\sum_{1 \leq i \leq n} \l_i = \l$. This proves that there exists $\lambda_1, \ldots, \l_n > 0$ such that $\pi \cong D^+_{\l_1} \otimes \cdots \otimes D^+_{\l_n}|_{\mbox{\tiny{M\"ob}}}.$
\end{proof}

\section{Application to homogeneous pairs}\label{section 3}
In a recent paper \cite{HHHVB1}, a classification of all the irreducible tuples of operators in the Cowen-Douglas class $B_r(\D^n)$ for $r = 1, 2, 3$ which are homogeneous with respect to M\"ob$^n$ (direct product of $n$ - copies of M\"ob) as well as Aut$(\D^n)$ is obtained. Also, a family of irreducible M\"ob$^n$ - homogeneous tuples of operators in the Cowen-Douglas class $B_r(\D^n)$ is obtained in the paper \cite{FHOCD} for arbitrary $r$. Note that the transitivity of the action of the groups M\"ob$^n$ and Aut$(\D^n)$ on $\mathbb D^n$ plays an important role in the study of M\"ob$^n$ - homogeneous tuples as well as Aut$(\mathbb D^n)$ - homogeneous tuples in the Cowen-Douglas class $B_r(\D^n)$. The non-transitivity of the action M\"ob on $\mathbb D^2$ makes the study of M\"ob - homogeneous pairs in the Cowen - Douglas class $B_1(\mathbb D^2)$ very different.

This section begins with the observation that if $(M_{z_1}^*, M_{z_2}^*)$ is a M\"ob - homogeneous pair of operators on a reproducing kernel Hilbert space $(\mathcal{H}, K)$ in $B_1(\D^2)$, then there exists a multiplier representation $\pi : \mbox{M\"ob} \to \mathcal{U}(\mathcal{H})$ such that $\pi(\varphi) \varphi(M_{z_i}) = M_{z_i} \pi(\varphi)$, $i = 1, 2$, $\varphi \in$ M\"ob. Due to Theorem \ref{thm3}(i), the Hilbert space $\mathcal{H}$ then decomposes into a orthogonal direct sum $\mathcal{H} = \oplus_{n = 0}^\infty \mathcal{S}_n$ of irreducible subspaces of $\pi$. With respect to this decomposition of the Hilbert space $\mathcal{H}$, each of the operators $M_{z_i}^*$ has an upper triangular representation. Suppose $P_n$ is the orthogonal projection onto the subspace $\mathcal{S}_n$. It is also proved that there exists $\lambda > 0$ such that $P_n M_{z_i}^*|_{\mathcal{S}_n}$ is unitarily equivalent to ${M^{(\lambda + 2n)}}^*$ for each $n \geq 0$, where $M^{(\lambda + 2n)}$ is the multiplication by the coordinate function on $\mathbb A^{(\lambda + 2n)}(\D)$. 

\begin{prop}\label{prop5}
Let $(M_1, M_2)$ be a pair of multiplication operators on a reproducing kernel Hilbert space $(\mathcal{H}, K)$ such that $(M_1^*, M_2^*) \in B_1(\D^2)$. If $(M_1, M_2)$ is homogeneous with respect to M\"ob, then there exists a multiplier representation $\pi : \mbox{M\"ob} \to \mathcal{U}(\mathcal{H})$ such that $\varphi(M_i)\pi(\varphi) = \pi(\varphi) M_i$, $i = 1, 2$, $\varphi \in$ M\"ob. 
\end{prop}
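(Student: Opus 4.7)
The plan is as follows. First I would use the M\"ob-homogeneity hypothesis to produce, for each $\varphi \in \mbox{M\"ob}$, a unitary $V \in \mathcal U(\mathcal H)$ with $V M_i V^{-1} = \varphi(M_i)$ for $i = 1, 2$. Second, I would prove that this $V$ is unique up to a unimodular constant, exploiting the rank-one Cowen--Douglas structure. Third, I would pass from the resulting map into $\mathcal U(\mathcal H)/\mathbb T$ to a Borel lift $\pi : \mbox{M\"ob} \to \mathcal U(\mathcal H)$ and verify that it is a projective representation. Finally, I would show that $\pi$ has the form prescribed by Equation \eqref{eqn: in1}.

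For the uniqueness step, suppose $V_1, V_2$ both conjugate $(M_1, M_2)$ to $(\varphi(M_1), \varphi(M_2))$. Then $W := V_2^{-1} V_1$ is a unitary commuting with each $M_i$, and hence also with each $M_i^*$. The one-dimensional joint eigenspaces $\ker(M_1^* - \bar z_1) \cap \ker(M_2^* - \bar z_2) = \mathbb C\, K(\cdot, \bz)$ (by the $B_1(\mathbb D^2)$ hypothesis) must therefore be preserved by $W$, giving $W K(\cdot, \bz) = \lambda(\bz) K(\cdot, \bz)$ for a function $\lambda : \mathbb D^2 \to \mathbb T$. Since $\bz \mapsto K(\cdot, \bz)$ is non-vanishing and anti-holomorphic, $\lambda$ is anti-holomorphic with values in $\mathbb T$, hence constant.

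With uniqueness in hand, the map $\varphi \mapsto [V] \in \mathcal U(\mathcal H)/\mathbb T$ is well-defined and Borel. Since $\mbox{M\"ob}$ is a Polish group and $\mathcal U(\mathcal H) \to \mathcal U(\mathcal H)/\mathbb T$ is a principal $\mathbb T$-bundle, a standard Borel selection theorem produces a Borel section $\pi : \mbox{M\"ob} \to \mathcal U(\mathcal H)$ with the required intertwining. For $\varphi, \psi \in \mbox{M\"ob}$, both $\pi(\varphi\psi)$ and $\pi(\varphi)\pi(\psi)$ satisfy the intertwining relation associated with $\varphi \psi$, so by uniqueness they differ by a unimodular scalar $m(\varphi, \psi)$, making $\pi$ a projective unitary representation with multiplier $m$.

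Finally, taking adjoints in $\pi(\varphi) M_i \pi(\varphi)^{-1} = M_{\varphi(z_i)}$ gives $M_i^* \pi(\varphi)^{-1} K(\cdot, \bz) = \overline{\varphi(z_i)}\, \pi(\varphi)^{-1} K(\cdot, \bz)$, so $\pi(\varphi)^{-1} K(\cdot, \bz)$ lies in the one dimensional joint eigenspace $\mathbb C\, K(\cdot, \varphi(\bz))$. Hence $\pi(\varphi)^* K(\cdot, \bz) = \alpha(\varphi, \bz) K(\cdot, \varphi(\bz))$ for some non-vanishing scalar $\alpha(\varphi, \bz)$; pairing any $f \in \mathcal H$ against $K(\cdot, \bz)$ yields $(\pi(\varphi) f)(\bz) = \overline{\alpha(\varphi, \bz)}\, f(\varphi(\bz))$, and relabelling $\varphi \to \varphi^{-1}$ puts this in the shape of Equation \eqref{eqn: in1} with cocycle $c(\varphi, \bz) = \overline{\alpha(\varphi^{-1}, \bz)}$, which is Borel in $\varphi$, holomorphic in $\bz$, and non-vanishing. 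The main obstacle I would expect is arranging Borel measurability of the selection $\varphi \mapsto \pi(\varphi)$; this can be circumvented by fixing the phase of $\pi(\varphi)$ explicitly, for instance by insisting that the coefficient of $K(\cdot, \varphi(0))$ in $\pi(\varphi)^* K(\cdot, 0)$ be a positive real number, which produces a canonical choice with automatic Borel dependence on $\varphi$.
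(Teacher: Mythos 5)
Your proposal is correct and takes essentially the same route as the paper: the paper obtains the projective representation by invoking the argument of \cite[Theorem 2.2]{THS} (which is precisely your uniqueness-up-to-a-unimodular-constant step via the one-dimensional joint eigenspaces, followed by a Borel selection), and then identifies $\pi$ as a multiplier representation by showing that $\pi(\varphi)$ carries $K(\cdot,\bw)$ into the one-dimensional joint eigenspace attached to $\varphi^{-1}(\bw)$, exactly as you do by taking adjoints. The only point to tidy is the usual $\varphi$ versus $\varphi^{-1}$ bookkeeping: since $\pi(\varphi)\pi(\psi)$ intertwines $M_i$ with $(\psi\circ\varphi)(M_i)$, your composite is associated with $\psi\varphi$ rather than $\varphi\psi$, a harmless relabelling that the paper glosses over as well.
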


\begin{proof}
The existence of a unique (up to equivalence) projective unitary representation $\pi : \mbox{M\"ob} \to \mathcal{U}(\mathcal{H})$ satisfying 
\begin{equation}\label{neweq1}
\varphi(M_i)\pi(\varphi) = \pi(\varphi) M_i,\,\, i = 1, 2,\,\, \varphi \in \mbox{M\"ob},    
\end{equation}
follows along the line of the proof of \cite[Theorem 2.2]{THS}. Let $\varphi$ be an arbitrary element of M\"ob and $\boldsymbol{w}$ be an arbitrary element of $\mathbb D^2$. Due to the fact that $(M_1^*, M_2^*) \in B_1(\D^2)$, $\displaystyle \cap_{i=1}^2 \ker \left(M_i - \bar{\boldsymbol{w}}I\right)$ is the one dimensional subspace of $\mathcal{H}$ spanned by $\{K(\cdot, \boldsymbol{w})\}$. Since $(\varphi(M_1)^*, \varphi(M_2)^*)$ is unitarily equivalent to $(M_1^*, M_2^*)$, the subspace $\displaystyle \cap_{i=1}^2 \ker \left(\varphi(M_i) - \bar{\boldsymbol{w}}I\right)$ is also one dimensional and spanned by $\{K(\cdot, \varphi^{-1}(\boldsymbol{w}))\}$. Consequently, due to Equation \eqref{neweq1}, $\pi(\varphi)K(\cdot, \boldsymbol{w})$ lies in $\displaystyle \cap_{i=1}^2 \ker \left(\varphi(M_i) - \bar{\boldsymbol{w}}I\right)$ and therefore, we have
$$\pi(\varphi)K(\cdot, \boldsymbol{w}) = c(\varphi^{-1}, \boldsymbol{w}) K(\cdot, \varphi^{-1}(\boldsymbol{w}))$$
for some scalar $c(\varphi^{-1}, \boldsymbol{w})$. This proves that $\pi$ is a multiplier representation of M\"ob.
\end{proof}

Given a M\"ob - homogeneous pair of multiplication operators $(M_1, M_2)$ on a reproducing kernel Hilbert space $(\mathcal{H}, K)$ with $(M_1^*, M_2^*) \in B_1(\D^2)$, let $\mathcal{M}_n, \mathcal{S}_n, \mathcal{A}_n, P_n$ and $\Gamma_n$ be same as in the previous section. 
\begin{lem}\label{lem3}
For every $n \geq 0$, the subspace $\mathcal{M}_n$ is an invariant subspace of both the operators $M_{z_1}$ and $M_{z_2}$.
\end{lem}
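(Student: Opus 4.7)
The plan is to verify the claim by a direct computation using the Leibniz rule for $\partial_1$ applied to products of the form $z_j f$, and then restricting to the diagonal $\Delta$.

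Recall that $\mathcal{M}_n = \{f \in \mathcal{H} : \partial_1^i f|_\Delta = 0,\ 0 \leq i \leq n\}$. Let $f \in \mathcal{M}_n$. I would handle $M_{z_2}$ first, since it is the easier case: $\partial_1$ commutes with multiplication by $z_2$ (as $z_2$ is constant in the first variable), so $\partial_1^i(z_2 f) = z_2\, \partial_1^i f$. Restricting to a point $(z,z) \in \Delta$ gives $z \cdot \partial_1^i f(z,z) = 0$ for all $0 \leq i \leq n$, so $M_{z_2} f \in \mathcal{M}_n$.

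For $M_{z_1}$, I would apply the Leibniz rule to obtain
\begin{equation*}
\partial_1^i(z_1 f) = z_1\, \partial_1^i f + i\, \partial_1^{i-1} f,\qquad i \geq 1,
\end{equation*}
with the convention that the second term is absent for $i = 0$. Evaluating at $(z,z) \in \Delta$: for $i = 0$ one gets $z f(z,z) = 0$ since $f|_\Delta = 0$; for $1 \leq i \leq n$ both $\partial_1^i f|_\Delta$ and $\partial_1^{i-1} f|_\Delta$ vanish (as $i$ and $i-1$ both lie in $\{0, 1, \ldots, n\}$), so $\partial_1^i(z_1 f)|_\Delta = 0$. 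Hence $M_{z_1} f \in \mathcal{M}_n$.

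There is no real obstacle here; this lemma is a routine calculation. The only mild subtlety is remembering that the Leibniz rule produces a nontrivial extra term only for $M_{z_1}$ (because $\partial_1$ differentiates in the first slot), and that this extra term vanishes on $\Delta$ precisely because the definition of $\mathcal{M}_n$ kills $\partial_1^{i-1} f|_\Delta$ as well as $\partial_1^i f|_\Delta$. Boundedness of $M_{z_1}$ and $M_{z_2}$ on $\mathcal{H}$ is already assumed from the hypothesis that $(M_1, M_2)$ is a pair of multiplication operators with $(M_1^*, M_2^*) \in B_1(\D^2)$, so $z_1 f, z_2 f \in \mathcal{H}$, and the computations above place them in $\mathcal{M}_n$.
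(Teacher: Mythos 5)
Your proof is correct and follows essentially the same route as the paper: the Leibniz identity $\partial_1^i(z_1 f) = z_1\,\partial_1^i f + i\,\partial_1^{i-1}f$ restricted to $\Delta$, with the $M_{z_2}$ case handled by the (even simpler) observation that $\partial_1$ commutes with multiplication by $z_2$. No issues.
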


\begin{proof}
Let $f \in \mathcal{M}_n$. Then we have $\partial_1^if|_{\Delta} = 0$ for $0 \leq i \leq n$. An easy computation shows that 
$$\partial_1^i \left(M_{z_1}f\right)(\bz) = i\partial_1^{i-1}f(\bz) + z_1\partial_1^if(\bz)$$
for every $\bz = (z_1, z_2) \in \D^2$. Since $\partial_1^if|_{\Delta} = 0$ for $0 \leq i \leq n$, it follows that $\partial_1^i M_{z_1}f|_{\Delta} = 0$ for $0 \leq i \leq n$. This proves that $\mathcal{M}_n$ is an invariant subspace of $M_{z_1}$. A similar computation shows that $\mathcal{M}_n$ is an invariant subspace of $M_{z_2}.$
\end{proof}

\begin{rem}\label{rem 3.x}
Note that $\oplus_{i = 0}^n\mathcal{S}_i = \mathcal{M}_n^\perp$ for every $n \geq 0$. The above lemma yields that $\oplus_{i = 0}^n\mathcal{S}_i$ is an invariant subspace of $M_1^*$ and $M_2^*$ for each $n \geq 0$. 
\end{rem}

\begin{prop}\label{prop3}
For every $n \geq 0$, let $M_n$ denote the multiplication operator on the reproducing kernel Hilbert space $\mathcal{A}_n$. Then $\Gamma_n \left(P_n M_{z_i}^*|_{\mathcal{S}_n}\right)\Gamma_n^* = M_n^*$ for $i = 1, 2$ and $n \geq 0$.
\end{prop}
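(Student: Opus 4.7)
The plan is to verify the operator equation $\Gamma_n(P_n M_{z_i}^*|_{\mathcal{S}_n})\Gamma_n^* = M_n^*$ on $\mathcal{A}_n$ by checking it on the dense family $\{K_n(\cdot, w) : w \in \D\}$, whose density follows from Lemma \ref{lem1} via the unitary $\Gamma_n$. Using the identification $\Gamma_n^* K_n(\cdot, w) = P_n \overbar{\partial_1}^n K(\cdot, (w,w))$ (immediate from the proof of Proposition \ref{prop2}) together with the eigen-relation $M_n^* K_n(\cdot, w) = \bar w \, K_n(\cdot, w)$, the problem reduces to proving that for every $w \in \D$ and $i = 1, 2$,
$$P_n M_{z_i}^* P_n \overbar{\partial_1}^n K(\cdot, (w, w)) = \bar w \, P_n \overbar{\partial_1}^n K(\cdot, (w, w)). \qquad (\star)$$

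The central computation is a differentiation of the reproducing-kernel identity $M_{z_i}^* K(\cdot, \bw) = \bar{w}_i\, K(\cdot, \bw)$. Applying $\partial^n/\partial \bar w_1^n$ to both sides (with $\bw = (w_1, w_2)$) and setting $\bw = (w, w)$ gives, for $i = 1$,
$$M_{z_1}^* \overbar{\partial_1}^n K(\cdot, (w, w)) = \bar w \, \overbar{\partial_1}^n K(\cdot, (w,w)) + n\, \overbar{\partial_1}^{n-1} K(\cdot, (w,w)),$$
whereas for $i = 2$ only the first term survives since $\bar w_2$ is constant in $\bar w_1$. The correction term $\overbar{\partial_1}^{n-1} K(\cdot, (w,w))$ belongs to $\mathcal{M}_{n-1}^\perp = \oplus_{k=0}^{n-1} \mathcal{S}_k$ and is therefore orthogonal to $\mathcal{S}_n$, so $P_n$ annihilates it. Consequently,
$$P_n M_{z_i}^* \overbar{\partial_1}^n K(\cdot, (w, w)) = \bar w \, P_n \overbar{\partial_1}^n K(\cdot, (w, w))$$
for both values of $i$.

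Finally, I would upgrade the identity from $P_n M_{z_i}^*$ to $P_n M_{z_i}^* P_n$. Write $\overbar{\partial_1}^n K(\cdot, (w,w)) = P_n \overbar{\partial_1}^n K(\cdot, (w,w)) + r_w$. Since $\overbar{\partial_1}^n K(\cdot, (w,w)) \in \mathcal{M}_n^\perp = \oplus_{k=0}^{n} \mathcal{S}_k$, the remainder $r_w$ lies in $\oplus_{k=0}^{n-1} \mathcal{S}_k = \mathcal{M}_{n-1}^\perp$. By Lemma \ref{lem3} (and the ensuing remark), the subspace $\mathcal{M}_{n-1}^\perp$ is invariant under both $M_{z_1}^*$ and $M_{z_2}^*$, so $M_{z_i}^* r_w \in \mathcal{M}_{n-1}^\perp$, which is orthogonal to $\mathcal{S}_n$, giving $P_n M_{z_i}^* r_w = 0$. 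Combining with the previous paragraph yields $(\star)$, completing the argument.

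The single delicate point is the final step: passing from $P_n M_{z_i}^*$ to $P_n M_{z_i}^* P_n$ relies crucially on the invariance of the truncated filtration $\mathcal{M}_{n-1}^\perp$ under the adjoints $M_{z_i}^*$, which is precisely the content of Lemma \ref{lem3}. Everything else is a transparent Leibniz differentiation of the reproducing-kernel eigen-relation, and the $n = 0$ case is subsumed cleanly since the correction term then vanishes identically.
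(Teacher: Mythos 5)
Your proposal is correct and follows essentially the same route as the paper: differentiate the eigenvalue relation $M_{z_i}^*K(\cdot,\bw)=\bar w_i K(\cdot,\bw)$, kill the lower-order term with $P_n$, and handle the $P_n^{\perp}$-component via the $M_{z_i}^*$-invariance of $\mathcal{M}_{n-1}^{\perp}$ from Lemma \ref{lem3}. Your writeup is, if anything, slightly more careful than the paper's at the final step (the paper cites invariance of $\mathcal{M}_n^{\perp}$ where $\mathcal{M}_{n-1}^{\perp}$ is what is actually used).
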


\begin{proof}
For $\bw = (w_1, w_2) \in \D^2$, since $M_{z_1}^* K(\cdot, \bw) = \overbar{w}_1 K(\cdot, \bw)$, it follows from a direct computation that 
\begin{equation*}
M_{z_1}^* \overbar{\partial}_1^n K(\cdot, \bw) = n \overbar{\partial}_1^{n-1} K(\cdot, \bw) + \overbar{w}_1\overbar{\partial}_1^n K(\cdot, \bw)
\end{equation*}
for every $n \geq 0$. For $\bw \in \Delta$, since $\overbar{\partial}_1^{n-1}K(\cdot, \bw) \in \mathcal{M}_{n-1}^\perp$, it follows from the above equation that
\begin{equation}\label{eq:17}
P_nM_{z_1}^* \overbar{\partial}_1^n K(\cdot, \bw) = \overbar{w}P_n\overbar{\partial}_1^n K(\cdot, \bw),
\end{equation}
where $\bw=(w, w) \in \Delta$.
Let $P_n^\perp$ denote the orthogonal projection onto the space $\mathcal{S}_n^\perp$. We claim that for $\bw \in \Delta$, $P_n^\perp\overbar{\partial}_1^n K(\cdot, \bw) \in \mathcal{M}_{n-1}^\perp$ for every $n \geq 1$ and $P_0^\perp K(\cdot, \bw) = 0$. Indeed, since $K(\cdot, \bw) \in \mathcal{S}_0$ for every $w \in \Delta$, it follows that $P_0^\perp K(\cdot, \bw) = 0$ for every $\bw \in \Delta$. Now, assume that $n \geq 1$. Note that $\overbar{\partial}_1^n K(\cdot, \bw) \in \mathcal{M}_n^\perp$ for every $\bw \in \Delta$ and therefore, $P_n^\perp\overbar{\partial}_1^n K(\cdot, \bw) \in M_{n}^\perp \ominus \mathcal{S}_n = \mathcal{M}_{n-1}^\perp$ for every $\bw \in \Delta$.

It follows from Remark \ref{rem 3.x} that $\mathcal{M}_{n}^\perp$ is an invariant subspace of $M_{z_1}^*.$ Thus, we have $P_nM_{z_1}^* P_n^\perp \overbar{\partial}_1^n K(\cdot, \bw) = 0$ for every $\bw \in \Delta$ and therefore, from Equation \eqref{eq:17}, it follows that 
\begin{equation*}\label{eq:19}
P_nM_{z_1}^* P_n\overbar{\partial}_1^n K(\cdot, \bw) = \overbar{w}P_n\overbar{\partial}_1^n K(\cdot, \bw),\,\,\bw=(w, w) \in \Delta.
\end{equation*}
Now, Proposition \ref{prop2} yields that $\Gamma_n \left(P_n M_{z_1}^*|_{\mathcal{S}_n}\right)\Gamma_n^* = M_n^*$. Following a similar computation it is easy to show that $\Gamma_n \left(P_n M_{z_2}^*|_{\mathcal{S}_n}\right)\Gamma_n^* = M_n^*$.
\end{proof}

\begin{thm}\label{thm4}
Let the adjoint of a pair of multiplication operators $(M_{z_1}, M_{z_2})$ on a reproducing kernel Hilbert space $(\mathcal{H}, K)$ be in $B_1(\mathbb{D}^2)$. Assume that $\mathcal{H}$ contains all the polynomials. If $(M_{z_1}, M_{z_2})$ is M\"ob - homogeneous, then there exists a decomposition of the Hilbert space $\mathcal{H} := \oplus_{n = 0}^{\infty} \mathcal{S}_n$ and $\lambda > 0$ such that $P_nM_{z_i}^* P_n$ is unitarily equivalent to ${M^{(\lambda + 2n)}}^*$, $i = 1, 2,$ $n \geq 0$, where $P_n$ is the orthogonal projection onto $\mathcal{S}_n$ and $M^{(\lambda + 2n)}$ is the multiplication operator on the weighted Bergman space $\mathbb{A}^{(\lambda + 2n)}(\D)$.
\end{thm}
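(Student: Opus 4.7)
The plan is to stitch together the three preceding results, Proposition~\ref{prop5}, Theorem~\ref{thm3}(i), and Proposition~\ref{prop3}, and then port the unitary equivalence at the level of reproducing kernel Hilbert spaces from the proof of Theorem~\ref{thm3}(i) over to the multiplication operators.

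First, I would invoke Proposition~\ref{prop5} to produce a multiplier representation $\pi : \mbox{M\"ob} \to \mathcal{U}(\mathcal{H})$ satisfying $\varphi(M_{z_i})\pi(\varphi) = \pi(\varphi) M_{z_i}$ for $i = 1, 2$ and every $\varphi \in$ M\"ob. Since $\mathcal{H}$ is assumed to contain every polynomial on $\D^2$, Theorem~\ref{thm3}(i) then applies and supplies a scalar $\lambda > 0$, together with the orthogonal decomposition $\mathcal{H} = \oplus_{n \geq 0} \mathcal{S}_n$ into the reducing subspaces $\mathcal{S}_n$ defined in the previous section, such that $\pi|_{\mathcal{S}_n} \cong D^+_{\lambda + 2n}$ for every $n \geq 0$. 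This already gives the claimed decomposition of $\mathcal{H}$ and the value of $\lambda$.

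Next, the content of Proposition~\ref{prop3} is precisely that the unitary $\Gamma_n : \mathcal{S}_n \to \mathcal{A}_n$ intertwines $P_n M_{z_i}^*|_{\mathcal{S}_n}$ and $M_n^*$, where $M_n$ denotes multiplication by the coordinate function on the reproducing kernel Hilbert space $(\mathcal{A}_n, K_n) \subseteq \mathrm{Hol}(\D, \C)$. So the problem is reduced to showing that, for each $n \geq 0$, the operator $M_n^*$ on $\mathcal{A}_n$ is unitarily equivalent to the adjoint of the multiplication operator $M^{(\lambda + 2n)}$ on $\mathbb A^{(\lambda+2n)}(\D)$. For this, I would recycle the unitary $U_n : \mathcal{A}_n \to \mathbb A^{(\lambda+2n)}(\D)$ already constructed in the proof of Theorem~\ref{thm3}(i), namely
\begin{equation*}
U_n K_n(\cdot, w) \;=\; \left(\tfrac{K_n(0,0)}{K_0(0,0)}\right)^{1/2} B^{(\lambda + 2n)}(\cdot, w)\, \overline{F(w)}, \qquad w \in \D,
\end{equation*}
where $F$ is the non-vanishing holomorphic function provided by \eqref{neweq5}.

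The key verification is that $U_n$ also intertwines $M_n^*$ with $(M^{(\lambda + 2n)})^*$. This is a one-line computation on reproducing kernels: since $M_n^* K_n(\cdot, w) = \bar w\, K_n(\cdot, w)$ and $(M^{(\lambda+2n)})^* B^{(\lambda+2n)}(\cdot, w) = \bar w\, B^{(\lambda+2n)}(\cdot, w)$, both sides of $U_n M_n^* K_n(\cdot, w) = (M^{(\lambda+2n)})^* U_n K_n(\cdot, w)$ equal $\bar w\, U_n K_n(\cdot, w)$; density of $\{K_n(\cdot, w) : w \in \D\}$ in $\mathcal{A}_n$ then upgrades this to an operator identity. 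Composing $U_n \Gamma_n$ gives the required unitary equivalence $P_n M_{z_i}^* P_n \cong (M^{(\lambda + 2n)})^*$. I do not anticipate a serious obstacle, since the preceding propositions do essentially all the work; the only point to watch is the implicit use of the polynomial-density hypothesis to guarantee, via Remark~\ref{rem1}(ii) and Lemma~\ref{lem4}, that every $\mathcal{S}_n$ is non-zero, so that each unitary equivalence in the direct sum is genuinely present.
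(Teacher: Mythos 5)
Your proposal is correct and follows essentially the same route as the paper's proof: Proposition \ref{prop5} to obtain the intertwining multiplier representation, Theorem \ref{thm3}(i) for the decomposition and the kernel identity \eqref{eq:21}, and Proposition \ref{prop3} to reduce to the operators $M_n^*$ on $\mathcal{A}_n$. The only difference is that you spell out the reproducing-kernel computation showing $U_n$ intertwines $M_n^*$ with $\bigl(M^{(\lambda+2n)}\bigr)^*$, a step the paper leaves implicit.
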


\begin{proof}
From Proposition \ref{prop5}, it follows that there exists a multiplier representation $\pi : \mbox{M\"ob} \to \mathcal{U}(\mathcal{H})$ such that $\pi(\varphi) \varphi(M_i) = M_i \pi(\varphi)$, $i = 1, 2$, $\varphi \in$ M\"ob. 

For every $n \geq 0$, let $\mathcal{M}_n$ be the subspace defined in Equation \eqref{eq:14} and $\mathcal{S}_n$ be the subspace defined in the line below Equation \eqref{eq:14}. Also, suppose $\Gamma_n$ is the map defined in Equation \eqref{eq:5} and $\mathcal{A}_n : =\mbox{Im}\,\,\Gamma_n$. Let $K_n$ be the reproducing kernel of $\mathcal{A}_n$.

From Proposition \ref{prop3}, it follows that $\Gamma_n \left(P_n M_{z_i}^*|_{\mathcal{S}_n}\right)\Gamma_n^* = M_n^*$ for $i = 1, 2$ and $n \geq 0$, where $M_n$ is the multiplication operator on $\mathcal{A}_n$. Again, from Theorem \ref{thm3}, we have a $\lambda > 0$ such that the reproducing kernel $K_n$ satisfies the Equation \eqref{eq:21}. This implies that $M_n^*$ is unitarily equivalent to $ {M^{(\lambda + 2n)}}^*$ for every $n \geq 0$. Therefore, it follows that $P_nM_{z_i}^* P_n$ is unitarily equivalent to ${M^{(\lambda + 2n)}}^*$, $i = 1, 2,$ $n \geq 0$. 
\end{proof}

\textit{Acknowledgement.} The authors are grateful to Professor Gadadhar Misra for his invaluable comments and suggestions in preparation of this article.


\begin{thebibliography}{10}

\bibitem{HOPRMGS}
B.~Bagchi and G.~Misra, \emph{Homogeneous operators and projective
  representations of the {M}\"obius group: a survey}, Proc. Indian Acad. Sci.
  Math. Sci. \textbf{111} (2001), no.~4, 415--437.

\bibitem{THS}
\bysame, \emph{The homogeneous shifts}, J. Funct. Anal. \textbf{204} (2003),
  no.~2, 293--319.


\bibitem{GBKCD}
R.~E. ~Curto and N.~Salinas, \emph{Generalized {B}ergman kernels and the
  {C}owen-{D}ouglas theory}, Amer. J. Math. \textbf{106} (1984), no.~2,
  447--488.

\bibitem{HHHVB1}
P.~Deb and S.~Hazra, \emph{Homogeneous hermitian holomorphic vector bundles and
  operators in the Cowen-Douglas class over the poly-disc}, J. Math. Anal. Appl., \textbf{510} (2022),  no. 2, 32 pp..

\bibitem{FHOCD}
\bysame, \emph{A family of homogeneous operators in the Cowen–Douglas class over the poly-disc}, Studia Math., \textbf{271} (2023), no.~1, 65--84.

\bibitem{KobIrr}
S.~Kobayashi, \emph{Irreducibility of certain unitary representations}, J. Math. Soc. Japan, \textbf{20} (1968), 638--642.

\bibitem{HOHS}
A.~Kor\'anyi and G.~Misra, \emph{Homogeneous operators on {H}ilbert spaces of
  holomorphic functions}, J. Funct. Anal. \textbf{254} (2008), no.~9,
  2419--2436.

\bibitem{ACHOCD}
\bysame, \emph{A classification of homogeneous operators in the
  {C}owen-{D}ouglas class}, Adv. Math. \textbf{226} (2011), no.~6, 5338--5360.
  
\bibitem{HTOHDSR}
G.~Misra and N. S.~N.~Sastry, \emph{Homogeneous tuples of operators and holomorphic discrete series representation of some classical groups}, J. Operator Th., \textbf{24}, (1990), 23--32.

\bibitem{OICHO}
G.~Misra and S.~S. ~Roy, \emph{On the irreducibility of a class of homogeneous
  operators}, System theory, the {S}chur algorithm and multidimensional
  analysis, Oper. Theory Adv. Appl., vol. 176, pp.~165--198.

\bibitem{HVBIOSD}
G.~Misra and H.~Upmeier, \emph{Homogeneous vector bundles and intertwining
  operators for symmetric domains}, Adv. Math. \textbf{303} (2016), 1077--1121.

\bibitem{TPHR}
L.~Peng and G.~Zhang, \emph{Tensor products of holomorphic representations and bilinear differential operators}, J. Funct. Anal. \textbf{210} (2004), 171--192.

\bibitem{HOJCS}
S.~S. ~Roy, \emph{Homogeneous operators, jet construction and similarity},
  Complex Anal. Oper. Theory \textbf{5} (2011), no.~1, 261--281.
  
\bibitem{VAR}
V.S. Varadarajan, \emph{Geometry of Quantum Theory},
  second edition, Springer-Verlag, 1985.

\end{thebibliography}
\end{document}